\newcommand{\dist}{{ \mathrm{dist}\/}}
\newtheorem{Theorem}{Theorem}[section]
\newtheorem{Definition}[Theorem]{Definition}
\newtheorem{Proposition}[Theorem]{Proposition}
\newtheorem{Corollary}[Theorem]{Collorary}
\newtheorem{Lemma}[Theorem]{Lemma}
\numberwithin{equation}{section}
\title{Inverse results on row sequences of Hermite-Pad\'e approximation}
\date{\today}
\author{Guillermo L\'{o}pez Lagomasino, Yanely Zaldivar Gerpe\footnotemark[1]}
\begin{document}

\maketitle
\renewcommand{\thefootnote}{\fnsymbol{footnote}}
\footnotetext[1]{Departamento de
Matem\'{a}ticas, Universidad Carlos III de Madrid, Avda. Universidad
30, 28911 Legan\'{e}s, Madrid, Spain. email: \{lago, yzaldiva\}\symbol{'100}math.uc3m.es.
G.L.L. received support from research grant MTM 2015-65888-C4-2-P of Ministerio de Econom\'{\i}a y Competitividad, Spain.}

\begin{center}
In memory of A.A. Gonchar
\end{center}

\begin{abstract}
We consider row sequences of (type II) Hermite-Pad\'e approximations with common denominator associated with a vector ${\bf f}$ of formal power expansions about the origin. In terms of the asymptotic behavior of the sequence of common denominators, we describe some analytic properties of ${\bf f}$ and restate some conjectures corresponding to questions once posed by A. A. Gonchar for row sequences of Pad\'e approximants.

\end{abstract}

\medskip

\textbf{Keywords:} Hermite-Pad\'e approximation, inverse type results.

\medskip

\textbf{AMS classification:} Primary 30E10; Secondary 41A21.

\maketitle

\section{Introduction}
\label{section:intro}
Let $\mathbf{f}=\left( f_1,f_2,\ldots,f_d\right)$ be a system of $d$ formal or convergent Taylor expansions about the origin; that is, for each $k=1,\ldots,d$, we have
\begin{equation}\label{serie hp}
f_k(z)=\sum\limits_{n=0}^\infty \phi_{n,k}z^n,\ \ \ \ \phi_{n,k}\in\mathbb{C}.
\end{equation}
When all these expansions are convergent about the origin, $\mathbf{D}=\left( D_1,D_2,\ldots,D_d\right)$ denotes a system of domains such that, for each $k=1,\ldots,d$, $f_k$ is meromorphic in $D_k$. We say that the point $\zeta$ is a pole of $\mathbf{f}$ in $\mathbf{D}$ of order $\tau$ if there exists an index $k\in 1,\ldots,d$ such that $\zeta\in D_k$ and it is a pole of $f_k$ of order $\tau$, and for $j\neq k$ either $\zeta$ is a pole of $f_j$ of order less than or equal to $\tau$ or $\zeta\not\in D_j$. When $\mathbf{D}=\left( D,\ldots,D\right)$ we say that $\zeta$ is a pole of $\mathbf{f}$ in $D$.\

Let $R_0(\mathbf{f})$ be the radius of the largest open disk $D_0(\mathbf{f})$ to which all the expansions $f_k$, $k=1,\ldots,d$ correspond to analytic functions. If $R_0(\mathbf{f})=0$, we take $D_m(\mathbf{f})=\emptyset$, $m\in\mathbb{Z}_+$; otherwise, $R_m(\mathbf{f})$ is the radius of the largest open disk $D_m(\mathbf{f})$ centered at the origin to which all the analytic elements $(f_k,D_0(f_k))$ can be extended so that $\mathbf{f}$ has at most $m$ poles counting multiplicities. The disk $D_m(\mathbf{f})$ constitutes for systems of functions the analogue of the $m$-th disk of meromorphy defined by J. Hadamard in \cite{hadamard} for $d=1$. Moreover, in that case both definitions coincide.\

\begin{Definition}\label{def hp}
Let $\mathbf{f}=(f_1,\ldots,f_d)$ be a system of $d$ formal Taylor expansions as in \eqref{serie hp}. Fix a multi-index $\mathbf{m}=(m_1,\ldots,m_d)\in\mathbb{Z}_+^d\setminus \lbrace \mathbf{0}\rbrace$ where $\mathbf{0}$ denotes the zero vector in $\mathbb{Z}_+^d$. Set $|\mathbf{m}|=m_1+\cdots+m_d$. Then, for each $n\geq\max\lbrace m_1,\ldots,m_d\rbrace$, there exist polynomials $q$ and $p_k , k=1,\ldots,d$, such that
\begin{enumerate}
\item[a.1)] $\deg p_k\leq n-m_k$, $k=1,\ldots,d$, $\deg q\leq |\mathbf{m}|$, $q\not\equiv 0$,
\item[a.2)] $q(z)f_k(z)-p_k=A_kz^{n+1}+\cdots$.
\end{enumerate}
The vector rational function $\mathbf{R}_{n,\mathbf{m}}=\left( p_1/q,\cdots,p_d/q\right)$ is called an $(n,\mathbf{m})$ (type II) Hermite-Pad\'e approximation of $\mathbf{f}$.
\end{Definition}

When $d=1$ (${\bf f} = f, {\bf m} = m$) this definition reduces (up to a shift of indices) to the definition of Pad\'e approximation. More precisely,  in this case $R_{n,m}(f)$ is what is usually called the Pad\'e approximant of type $(n-m,m)$ with respect to $f$. When working with vector functions, it is convenient that in $a.2)$ all expansions start with the same power of $z$ on the right hand side which justifies the shift.

In contrast with Pad\'e approximation, Hermite-Pad\'e approximants, in general, are not uniquely determined. In the sequel, we assume that given $(n,\mathbf{m})$ one particular solution is taken. For that solution we write
\begin{equation*}
\mathbf{R}_{n,\mathbf{m}}=\left( R_{n,\mathbf{m},1},\cdots,R_{n,\mathbf{m},d}\right)=\left( p_{n,\mathbf{m},1},\cdots,p_{n,\mathbf{m},d}\right)/q_{n,\mathbf{m}},
\end{equation*}
where $q_{n,\mathbf{m}}$ is a monic polynomial that has no common zero simultaneously with all the $p_{n,\mathbf{m},k}$. Sequences $(\mathbf{R}_{n,\mathbf{m}})_{n\geq |\mathbf{m}|}$, for which $\mathbf{m}$ remains fixed when $n$ varies are called row sequences.

In A.A. Gonchar's mathematical legacy a subject of major interest is the study of Pad\'e approximation, in particular what he called inverse type problems. As opposed to direct type results, where one starts out from an analytic element with some knowledge of its analytic properties and considers its possible approximation by means of sequences of Pad\'e approximants, the starting point of inverse type problems is the behavior of sequences of denominators of the Pad\'e approximants of a formal expansion and from there one tries to discover the analytic properties of the formal expansion. In this direction, Gonchar \cite[p. 540]{gon2} proved some important results and posed a number of interesting conjectures related with row sequences of Pad\'e approximants mostly solved by  S.P. Suetin in \cite{sue1} and \cite{suetin1985}. We will return to some of these conjectures later. For the moment, in the context of Hermite-Pad\'e approximation, we present a relatively recent result which extends a theorem due to A.A. Gonchar (see \cite[Sect. 3-4]{gon1}
and \cite[Sect. 2]{gon3}). Before proceeding we need to introduce some concepts.

\begin{Definition}
Given $\mathbf{f}=(f_1,\ldots,f_d)$ and $\mathbf{m}=(m_1,\ldots,m_d)\in \mathbb{Z}_+^d\setminus \lbrace \mathbf{0}\rbrace$ we say that $\zeta\in \mathbb{C}^*:=\mathbb{C}\setminus\lbrace 0\rbrace$ is a system pole of order $\tau$ of $\mathbf{f}$ with respect to $\mathbf{m}$ if $\tau$ is the largest positive integer such that for each $s=1,\ldots,\tau$ there exists at least one polynomial combination of the form
\begin{equation}\label{system pole}
\sum\limits_{k=1}^dp_kf_k,\ \ \ \deg p_k<m_k,\ \ \ k=1,\ldots,d,
\end{equation}
which is analytic on a neighborhood of $\overline{D}_{|\zeta|} := \{z: |z| \leq |\zeta|\}$ except for a pole at $z=\zeta$ of exact order $s$. If some component $m_k$ equals zero the corresponding polynomial $p_k$ is taken identically equal to zero.
\end{Definition}

We wish to underline that if some component $m_k$ equals zero, that component places no restriction on Definition \ref{def hp} and does not report any benefit in finding system poles; therefore, without loss of generality one can restrict the attention to multi-indices $\mathbf{m}\in \mathbb{N}^d$.\

A system $\mathbf{f}$ cannot have more than $|\mathbf{m}|$ system poles with respect to $\mathbf{m}$ counting orders. A system pole need not be a pole of $\mathbf{f}$ and a pole may not be a system pole, see examples in \cite{cacoq2}.

To each system pole $\zeta$ of $\mathbf{f}$ with respect to $\mathbf{m}$ one can associate several characteristic values. Let $\tau$ be the order of $\zeta$ as a system pole of $\mathbf{f}$. For each $s=1,\ldots,\tau$ denote by $r_{\zeta,s}(\mathbf{f},\mathbf{m})$ the largest of all the numbers $R_s(g)$ (the radius of the largest disk containing at most $s$ poles of $g$), where $g$ is a polynomial combination of type \eqref{system pole} that is analytic on a neighborhood of $\overline{D}_{|\zeta|}$ except for a pole at $z=\zeta$ of order $s$. Set
\begin{equation*}
R_{\zeta,s}(\mathbf{f},\mathbf{m}):=\min\limits_{k=1,\ldots,s}r_{\zeta,k}(\mathbf{f},\mathbf{m}),
\end{equation*}
\begin{equation*}
R_{\zeta}(\mathbf{f},\mathbf{m}):=R_{\zeta,\tau}(\mathbf{f},\mathbf{m}):=\min\limits_{s=1,\ldots,\tau}r_{\zeta,k}(\mathbf{f},\mathbf{m}).
\end{equation*}

It is not difficult to verify that if $d=1$ and $(\mathbf{f},\mathbf{m})=(f,m)$, the concepts of system poles and poles in $D_m(f)$ coincide.\

Let $\mathcal{Q}(\mathbf{f},\mathbf{m})$ denote the monic polynomial whose zeros are the system poles of $\mathbf{f}$ with respect to $\mathbf{m}$ taking account of their order. The set of distinct zeros of $\mathcal{Q}(\mathbf{f},\mathbf{m})$ is denoted by $\mathcal{P}(\mathbf{f},\mathbf{m})$. In \cite{cacoq2} the following result was proved.

\begin{Theorem}\label{simultaneo}
Let $\mathbf{f}$ be a system of formal Taylor expansions as in \eqref{serie hp} and fix a multi-index $\mathbf{m}\in\mathbb{N}^d$. Then, the following assertions are equivalent.
\begin{enumerate}
\item[{\rm a})] $R_0(\mathbf{f})>0$ and $\mathbf{f}$ has exactly $|\mathbf{m}|$ system poles with respect to $\mathbf{m}$ counting multiplicities.
\item[{\rm b})] The denominators $q_{n,\mathbf{m}}$, $n\geq |\mathbf{m}|$, of simultaneous Pad\'e approximations of $\mathbf{f}$ are uniquely determined for all sufficiently large $n$ and there exists a polynomial $q_{|\mathbf{m}|}$ of degree $|\mathbf{m}|$, $q_{|\mathbf{m}|}(0)\neq 0$, such that
\begin{equation*}
\limsup\limits_{n\longrightarrow\infty}\Vert q_{n,\mathbf{m}} - q_{|\mathbf{m}|}\Vert^{1/n}=\theta<1,
\end{equation*}
where $\|\cdot\|$ denotes (for example) the coefficient norm in the space of polynomials of degree $\leq |\mathbf{m}|$.
\end{enumerate}
Moreover, if either a) or b) takes place then $q_{|\mathbf{m}|}\equiv\mathcal{Q}(\mathbf{f},\mathbf{m})$ and
\begin{equation}\label{ecuac 3}
\theta=\max\lbrace{|\zeta|}/{R_{\zeta}(\mathbf{f},\mathbf{m})}:\ \zeta\in\mathcal{P}(\mathbf{f},\mathbf{m})\rbrace.
\end{equation}
\end{Theorem}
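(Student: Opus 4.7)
The plan is to follow the classical Gonchar--Montessus-de~Ballore strategy for row sequences, adapted to the vector setting and to the notion of system pole. I would prove the two implications separately: (a)$\Rightarrow$(b) is the ``direct'' estimate producing the rate of convergence of $q_{n,\mathbf{m}}$, while (b)$\Rightarrow$(a) is the ``inverse'' step where analytic information must be extracted from asymptotic data.

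For (a)$\Rightarrow$(b), fix a system pole $\zeta$ of order $\tau$ and for each $s=1,\ldots,\tau$ fix a polynomial combination $g_{s}=\sum_{k=1}^d p_{s,k}f_k$ with $\deg p_{s,k}<m_k$ that is analytic on a neighborhood of $\overline{D}_{|\zeta|}$ except for a pole of exact order $s$ at $\zeta$ and is meromorphic up to radius $r_{\zeta,s}(\mathbf{f},\mathbf{m})$. Multiplying the relations of $a.2)$ by $p_{s,k}$ and summing over $k$ yields
\[
q_{n,\mathbf{m}}(z)\,g_s(z) - P_{n,s}(z) = O(z^{n+1}),
\]
with $P_{n,s}$ a polynomial. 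A Cauchy integral on a circle of radius approaching $R_{\zeta,s}(\mathbf{f},\mathbf{m})$ enclosing $\zeta$, combined with a residue analysis at $\zeta$, forces $q_{n,\mathbf{m}}$ to vanish at $\zeta$ to order at least $\tau$ asymptotically, with geometric rate controlled by $|\zeta|/R_{\zeta}(\mathbf{f},\mathbf{m})$. Doing this for every system pole and using $\sum\tau_j=|\mathbf{m}|=\deg q_{n,\mathbf{m}}$, the monic $q_{n,\mathbf{m}}$ must converge geometrically to $\mathcal{Q}(\mathbf{f},\mathbf{m})$, with $\theta$ given by the maximum in \eqref{ecuac 3}. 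Eventual uniqueness of $q_{n,\mathbf{m}}$ then follows from nondegeneracy of the limit.

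For (b)$\Rightarrow$(a), assume $q_{n,\mathbf{m}}\to q_{|\mathbf{m}|}$ geometrically with rate $\theta<1$, $\deg q_{|\mathbf{m}|}=|\mathbf{m}|$, and $q_{|\mathbf{m}|}(0)\neq 0$. From $a.2)$ one deduces that each $f_k$ is meromorphic in $|z|<1/\theta$, so $R_0(\mathbf{f})>0$. The crucial step is to show that every zero $\zeta$ of $q_{|\mathbf{m}|}$ of multiplicity $\tau$ is a system pole of order $\tau$: for each $s=1,\ldots,\tau$ one must construct a polynomial combination of the form \eqref{system pole} that is analytic on a neighborhood of $\overline{D}_{|\zeta|}$ except for a pole of exact order $s$ at $\zeta$. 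I would extract these combinations as limits of adjugate/cofactor polynomials built from the $p_{n,\mathbf{m},k}$, using the uniqueness of $q_{n,\mathbf{m}}$ to guarantee that the limits are nontrivial and have the correct pole structure. Counting orders across all zeros of $q_{|\mathbf{m}|}$ gives exactly $|\mathbf{m}|$ system poles, and comparing the rate $\theta$ obtained from (a)$\Rightarrow$(b) with the hypothesized rate gives the formula \eqref{ecuac 3} and the identification $q_{|\mathbf{m}|}\equiv\mathcal{Q}(\mathbf{f},\mathbf{m})$.

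The main obstacle is the nondegeneracy/existence step in (b)$\Rightarrow$(a): producing, for each $s\leq\tau$, a combination of type \eqref{system pole} whose pole at $\zeta$ has \emph{exact} order $s$ (not smaller) and that remains analytic strictly beyond $\overline{D}_{|\zeta|}$. Naive limits of cofactor polynomials may collapse the pole order or destroy the analytic continuation. Handling this requires a careful linear-algebraic analysis of the vector $(p_{n,\mathbf{m},1},\ldots,p_{n,\mathbf{m},d})/q_{n,\mathbf{m}}$ and of the obstructions to reaching the maximal pole order at each $\zeta$. Once this is secured, the matching of rates, the counting of orders, and the identification of the limit polynomial fall into place.
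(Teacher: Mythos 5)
First, a point of context: the paper does not prove Theorem \ref{simultaneo} at all --- it quotes it from \cite{cacoq2} (``In \cite{cacoq2} the following result was proved''). So the comparison must be with the proof in that reference. Your direct implication (a)$\Rightarrow$(b) follows the standard route: multiplying the relations a.2) by the coefficients of a system-pole combination, applying Cauchy's formula on circles approaching radius $R_{\zeta,s}(\mathbf{f},\mathbf{m})$, and inducting on the order of the derivative to force $Q_{n,\mathbf{m}}^{(s)}(\zeta)\to 0$ geometrically for $s=0,\ldots,\tau-1$. This is exactly the mechanism the paper itself reproduces in Section 2 as Theorem \ref{polesystem}, and together with the count $\sum\tau_j=|\mathbf{m}|$ it does yield geometric convergence of $q_{n,\mathbf{m}}$ to $\mathcal{Q}(\mathbf{f},\mathbf{m})$; that half is sound in outline.

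The inverse implication (b)$\Rightarrow$(a) is where the proposal has a genuine gap, and you have in effect named it yourself without closing it. Two points. First, the assertion that ``from a.2) one deduces that each $f_k$ is meromorphic in $|z|<1/\theta$'' is not justified and is not even the right statement: already establishing $R_0(\mathbf{f})>0$ from the behavior of the denominators is nontrivial and in \cite{cacoq2} rests on the machinery of incomplete Pad\'e approximants (the content of Propositions \ref{teo cacoq} and \ref{rmayor0} here), not on a direct reading of a.2); the radius $1/\theta$ plays no such role. Second, and decisively, the heart of the inverse step is to produce, for each zero $\zeta$ of $q_{|\mathbf{m}|}$ of multiplicity $\tau$ and each $s=1,\ldots,\tau$, a combination of type \eqref{system pole} analytic on a neighborhood of $\overline{D}_{|\zeta|}$ except for a pole of \emph{exact} order $s$ at $\zeta$. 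Saying you ``would extract these combinations as limits of adjugate/cofactor polynomials'' and then conceding that ``naive limits \ldots may collapse the pole order or destroy the analytic continuation'' is a description of the obstacle, not an argument. The proof in \cite{cacoq2} overcomes it by an induction on the distinct moduli of the zeros of $q_{|\mathbf{m}|}$, using the geometric rate together with incomplete Pad\'e estimates to push the meromorphic continuation of suitable combinations $\sum p_kf_k$ past each circle $|z|=|\zeta|$, and then a dimension count in the $|\mathbf{m}|$-dimensional space of tuples $(p_1,\ldots,p_d)$, $\deg p_k<m_k$, to show that the exact orders $1,\ldots,\tau$ are all attained and that the total number of system poles is exactly $|\mathbf{m}|$. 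None of that apparatus appears in the proposal, so as written it does not establish (b)$\Rightarrow$(a), nor the equality (rather than $\leq$) in \eqref{ecuac 3}, which also depends on the inverse analysis.
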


In the scalar case $(\mathbf{f},\mathbf{m}) = (f,m)$, $q_{n,m}$ is uniquely determined, $R_{\zeta}(\mathbf{f},\mathbf{m}) = R_m(f)$ for every
$\zeta\in\mathcal{P}({f},{m})$, and the result reduces to Gonchar's theorem. In this case, it was also shown that
\begin{equation}\label{ecuac 4}
\limsup\limits_n\Vert R_{n,m}-f\Vert_{\mathcal{K}}^{1/n} = {\max\lbrace\vert z\vert :\ z\in\mathcal{K}\rbrace}/{R_m(f)},
\end{equation}
for every compact subset $\mathcal{K} \subset D_m(f)$, where $\|\cdot\|_{\mathcal{K}}$ denotes the sup norm. For the vector case, a formula which substitutes \eqref{ecuac 4} was given in \cite[Theorem 3.7]{cacoq2}, but we refrain from presenting it since it requires additional notation which will not be relevant in what follows.

In this theorem, a) implies b) and $\leq$ instead of $=$ in \eqref{ecuac 3} and \eqref{ecuac 4} represent the direct statements and constitute a Montessus de Ballore type theorem \cite{Mon}. That b) implies a) and the opposite inequalities in \eqref{ecuac 3} and \eqref{ecuac 4} give the inverse type results.

In \cite[Theorem 1]{graves}, Graves-Morris and Saff established a direct type result for Hermite-Pad\'e approximation based on a so called notion of polewise independence of $(\mathbf{f},\mathbf{m})$ in $D_{|\mathbf{m}|}(\mathbf{f})$. However, the result proved in \cite{graves} does not allow a converse statement in the sense of Gonchar's theorem as the examples in \cite{cacoq2} show.

Inspired in the conjectures posed by A.A. Gonchar in \cite{gon2} for the scalar case, some natural questions arise. Is it true that each system pole attracts with geometric rate at least as many zeros of the polynomials $q_{n,\mathbf{m}}$ as its order (even if the total number of system poles  is less than $|\textbf{m}|$)? Reciprocally, if some point in $\mathbb{C}^*$ attracts a certain number of zeros of the polynomials $q_{n,\textbf{m}}$ with geometric rate, does it mean that the point is necessarily a system pole of $(\mathbf{f},\mathbf{m})$? What can be said about the points which are limit of the zeros of the denominators? Are they singular points of $(\mathbf{f},\mathbf{m})$ in some sense?

In this paper, we will focus basically in the case when
\begin{equation}\label{denominators}
\lim\limits_{n\longrightarrow\infty}q_{n,\mathbf{m}}=q_{|\mathbf{m}|},\ \ \ \ \deg q_{|\mathbf{m}|} = |\mathbf{m}|,\ \ \ \ q_{|\mathbf{m}|}(0)\neq 0,
\end{equation}
but the rate of convergence is not known in advance. Our point of reference is the following extension of Fabry's theorem (see \cite{fabry} or \cite{Bie}) due to S.P. Suetin \cite{suetin1985} for Pad\'e approximation.

In the scalar case, suppose that \eqref{denominators} holds and
\begin{equation}
\label{condSue}
 0 < |z_1| \leq \cdots \leq |z_N| < |z_{N+1}| = \cdots = |z_m|,
\end{equation}
where $q_m(z) = \prod_{k=1}^m(z-z_k)$. Then $R_{m-1}(f) = |z_m|$, the points $z_1,\ldots,z_N$ are the poles of $f$ in $D_{m-1}(f)$ (taking account of their order), and $z_{N+1},\ldots,z_m$ are singular points of $f$ on the boundary of $D_{m-1}(f)$.

For Hermite-Pad\'e approximation, let us introduce the concept of system singularity of $\mathbf{f}$ with respect to $\mathbf{m}$.
\begin{Definition}\label{sing sistema}
Given $\mathbf{f}=(f_1,\ldots,f_d)$ and $\mathbf{m}=(m_1,\ldots,m_d)\in\mathbb{Z}_+^d\setminus\lbrace\mathbf{0}\rbrace$ we say that $\zeta\in\mathbb{C}^* $ is a system singularity of $\mathbf{f}$ with respect to $\mathbf{m}$ if there exists at least one polynomial combination $F$ of the form \eqref{system pole} analytic on $D_{|\zeta|}$ and $\zeta$ is a singular point of $F$.
\end{Definition}

Assuming \eqref{denominators}, the ultimate goal of this paper is to study the connection between the zeros of $q_{|{\bf m}|}$ and the system singularities of $({\bf f},{\bf m})$ which would give an extension of Suetin's theorem.

The following example shows that given $(\mathbf{f},\mathbf{m})$ a point in $\mathbb{C}^*$ may be simultaneously a system pole and a singularity of a different nature. Take
\[ f_1(z) = \frac{1}{z-1} + e^{z}, \qquad f_2(z) = \log(z-1), \qquad \mathbf{f} = (f_1,f_2), \qquad \textbf{m} = (1,1).\]
Obviously, $1$ is a system pole of $({\bf f},{\bf m})$ of order one because of $f_1$, and it is also a system singularity of logarithmic type because of $f_2$. Direct calculations show that if $q_{n,\textbf{m}}(z) = (z-\zeta_{n,1})(z-\zeta_{n,2}), |\zeta_{n,1} - 1| \leq |\zeta_{n,2} - 1|,$ is the $(n,\textbf{m})$ Hermite-Pad\'e denominator of $({\bf f},{\bf m})$, then
\[ \limsup_{n\to \infty} |\zeta_{n,1} - 1|^{1/n} = 0, \qquad |\zeta_{n,2} - 1| \sim  1/n, \quad n\to \infty.\]
In particular
\[ \lim_{n\to \infty} q_{n,\textbf{m}}(z) = (z-1)^2,\]
but one sequence of zeros converges very fast to $1$ whereas the other one does it slowly.

Fix $(\textbf{f},\textbf{m})$ and $\zeta \in \mathbb{C}^* $. Let
$\zeta_{n,1},\ldots ,\zeta_{n,\ell_n}, 0 \leq \ell_n \leq |\textbf{m}|,$ be the zeros of $q_{n,\textbf{m}}$
indexed in increasing distance from $\zeta$. That is
\[|\zeta-\zeta_{n,1}|\leq|\zeta-\zeta_{n,2}|\leq\cdots \leq|\zeta-\zeta_{n,\ell_n}|\,.\]
Following A.A. Gonchar in \cite{gon2}, we define two characteristic values. Set $\lambda(\zeta):=\nu$ if
\[\lim_{n\to \infty}|\zeta-\zeta_{n,\nu}| =0, \qquad
\limsup_{n\to \infty} |\zeta-\zeta_{n,\nu+1}|>0\] (for $\nu >\ell_n$ by convention  $|\zeta-\zeta_{n,\nu}|:=1$, and when $\limsup_{n\to \infty}|\zeta-\zeta_{n,1}|>0$, we take
$\lambda(\zeta)=0$). Analogously, $\mu(a):=\nu$ if
\[\limsup_{n\to \infty} |\zeta-\zeta_{n,\nu}|^{1/n}<1, \qquad
\limsup_{n\to \infty} |\zeta-\zeta_{n,\nu+1}|^{1/n}\geq1.\]

In Section 2 we prove that if $\zeta$ is a system pole of $({\bf f},{\bf m})$ of order $\tau$ then $\mu(\zeta) \geq \tau$. We think that the following statements are plausible:
\begin{itemize}
\item[C1)] If the denominators $q_{n,\mathbf{m}}$ are uniquely determined for all sufficiently large $n$ and $\mu(\zeta) \geq 1$ then $\zeta$ is a system pole of $(\textbf{f},\textbf{m})$ of order $\tau = \mu(\zeta)$.
\item[C2)] If the denominators $q_{n,\mathbf{m}}$ are uniquely determined for all sufficiently large $n$ and $\lambda(\zeta) \geq 1$, then $\zeta$ is a system singularity  of $(\textbf{f},\textbf{m})$.
\end{itemize}

We wish to underline that even in the scalar case statement C2) remains open except when \eqref{denominators} holds. Therefore, for Hermite-Pad\'e approximation the proof of C2) under \eqref{denominators} would already be of great interest.

\section{System poles are strong attractors}
\label{section:Systempoles}

We start out proving the following direct type result.

\begin{Theorem}\label{polesystem}
Let $\zeta$ be a system pole of $(\textbf{f},\textbf{m})$ of order $\tau$ then $\mu(\zeta) \geq \tau$.
\end{Theorem}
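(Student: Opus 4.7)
The plan is to combine the Hermite--Pad\'e orthogonality with a residue calculation that exploits the polynomial combinations guaranteed by the system pole hypothesis. Since $R_\zeta(\mathbf{f},\mathbf{m}) > |\zeta|$, fix $\rho$ with $|\zeta| < \rho < R_\zeta(\mathbf{f},\mathbf{m})$. For each $s = 1,\ldots,\tau$, pick a polynomial combination $G_s = \sum_k P_{k,s} f_k$ (with $\deg P_{k,s} < m_k$) analytic on $\overline{D}_\rho$ except for a pole of exact order $s$ at $\zeta$; denote by $\kappa_s \neq 0$ the leading Laurent coefficient of $G_s$ at $\zeta$. Multiplying the orthogonality $q_{n,\mathbf{m}} f_k - p_{n,\mathbf{m},k} = O(z^{n+1})$ by $P_{k,s}(z)$ and summing over $k$ yields
\[
q_{n,\mathbf{m}}(z)\, G_s(z) - \Pi_{n,s}(z) = O(z^{n+1}),
\]
where $\Pi_{n,s} := \sum_k P_{k,s}\, p_{n,\mathbf{m},k}$ has degree $\leq n-1$. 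In particular the $n$-th Taylor coefficient of $q_{n,\mathbf{m}} G_s$ at the origin vanishes.

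The key step is to evaluate
\[
I_{n,s} := \frac{1}{2\pi i}\oint_{|w|=\rho}\frac{q_{n,\mathbf{m}}(w)\, G_s(w)}{w^{n+1}}\,dw
\]
in two ways. An ML-estimate, using that $\|q_{n,\mathbf{m}}\|$ is bounded in $n$ (classical under $R_0(\mathbf{f})>0$) and that $G_s$ is continuous on the circle $|w|=\rho$, gives $|I_{n,s}| \leq C/\rho^n$. By the residue theorem, $I_{n,s} = \mathrm{Res}_0 + \mathrm{Res}_\zeta$; the residue at $0$ is the $n$-th Taylor coefficient of $q_{n,\mathbf{m}} G_s$, which vanishes, so $|\mathrm{Res}_\zeta| \leq C/\rho^n$. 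Expanding $\mathrm{Res}_\zeta$ via the Laurent expansion of $G_s$ at $\zeta$ and Leibniz applied to $q_{n,\mathbf{m}}(w)/w^{n+1}$, one obtains a lower triangular linear combination of $q_{n,\mathbf{m}}^{(j)}(\zeta)$, $j=0,\ldots,s-1$, in which the coefficient of the leading term $q_{n,\mathbf{m}}^{(s-1)}(\zeta)$ is $\kappa_s/((s-1)!\,\zeta^{n+1})$. Taking $s=1,2,\ldots,\tau$ and solving inductively yields
\[
|q_{n,\mathbf{m}}^{(j)}(\zeta)| \leq C_j\, n^{j}\, |\zeta|^{n+1}/\rho^{n}, \qquad j = 0,\ldots,\tau-1,
\]
i.e.\ geometric decay at any rate $\theta > |\zeta|/\rho$. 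I expect the main technical obstacle to be the inductive Leibniz bookkeeping that extracts this triangular system and verifies that the polynomial-in-$n$ factors arising from differentiating $w^{-(n+1)}$ do not spoil the rate.

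To convert these derivative bounds into a zero count, decompose $q_{n,\mathbf{m}}(z) = (z-\zeta)^\tau A_n(z) + B_n(z)$ with $\deg B_n < \tau$, $\|B_n\| = O(\theta^n)$, and $A_n$ monic of degree $|\mathbf{m}|-\tau$ with bounded coefficients. A Rouch\'e argument on the shrinking circle $|z-\zeta| = \theta^{n/(2\tau)}$ produces at least $\tau$ zeros of $q_{n,\mathbf{m}}$ inside; if $A_n(\zeta) \to 0$ along a subsequence, applying the same argument with $\tau + t$ in place of $\tau$ (where $t$ is the order of vanishing of a subsequential limit of $A_n$ at $\zeta$) only forces more zeros to cluster at $\zeta$, and a uniform geometric rate (at worst $\theta^{1/|\mathbf{m}|}$) survives. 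Hence the $\tau$ zeros of $q_{n,\mathbf{m}}$ closest to $\zeta$ satisfy $\limsup_n |\zeta-\zeta_{n,i}|^{1/n} < 1$, which is precisely $\mu(\zeta) \geq \tau$.
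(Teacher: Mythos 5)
Your derivative estimates at $\zeta$ are essentially correct and reach the same intermediate conclusion as the paper, namely $\limsup_n|q_{n,\mathbf{m}}^{(j)}(\zeta)|^{1/n}\leq|\zeta|/R_{\zeta}(\mathbf{f},\mathbf{m})<1$ for $j=0,\ldots,\tau-1$, by a slightly different route: you apply the residue theorem once to $q_{n,\mathbf{m}}G_s/w^{n+1}$ on a single circle and invert a triangular system in the derivatives, whereas the paper works with $h_s=(z-\zeta)^sg_s$, splits the Cauchy integral into two contours ($I_n$ and $J_n$), and differentiates the resulting estimate $s-1$ times. Both are legitimate. One correction: the boundedness of $\|q_{n,\mathbf{m}}\|$ is not ``classical'' --- these polynomials are monic and their zeros may escape to infinity --- so you must first renormalize (the paper imposes $\sum_k|\lambda_{n,k}|=1$), which is harmless since the zeros are unchanged.

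The genuine gap is in the final step, in the degenerate case of your Rouch\'e argument. When $A_n(\zeta)\to0$ along a subsequence, i.e., when a subsequential limit of the normalized $q_{n,\mathbf{m}}$ vanishes to order $\tau+t>\tau$ at $\zeta$, you propose to ``apply the same argument with $\tau+t$ in place of $\tau$.'' But that requires $\|\widetilde{B}_n\|=\mathcal{O}(\theta^n)$ for the Taylor polynomial of degree $\tau+t-1$ of $q_{n,\mathbf{m}}$ at $\zeta$, i.e., geometric decay of $q_{n,\mathbf{m}}^{(j)}(\zeta)$ for $j=\tau,\ldots,\tau+t-1$. Those derivatives are only known to tend to $0$ along the subsequence, with no rate: the system-pole hypothesis controls only the first $\tau$ derivatives. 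Concretely, on a circle $|z-\zeta|=r_n\to0$ the contribution $A_n^{(i)}(\zeta)(z-\zeta)^{\tau+i}/i!$ with $i<t$ has size $o(1)\cdot r_n^{\tau+i}$, which dominates the ``main'' term of size comparable to $r_n^{\tau+t}$ unless $A_n^{(i)}(\zeta)$ decays with a rate you do not possess; so the dominance needed for Rouch\'e cannot be verified and the claimed surviving rate $\theta^{1/|\mathbf{m}|}$ is unsubstantiated. The paper circumvents exactly this difficulty by a different device: it first shows (via Hurwitz and the derivative bounds) that at least $\tau$ zeros converge to $\zeta$, then forms $\widetilde{Q}_n(z)=\prod_{k=1}^{\lambda_n}(z-\zeta_{n,k})$ over \emph{all} zeros in a fixed small neighborhood of $\zeta$, transfers the geometric bounds to $\widetilde{Q}_n^{(s)}(\zeta)$, $s\leq\tau-1$, using that $\widetilde{Q}_n/Q_{n,\mathbf{m}}$ is holomorphic and uniformly bounded near $\zeta$, and finally extracts the rate for the ordered zeros from inequalities of the type $|\zeta-\zeta_{n,1}|^{\lambda_n}\leq|\widetilde{Q}_n(\zeta)|$ together with an induction on the Leibniz expansion of $\widetilde{Q}_n^{(k)}(\zeta)$. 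You need this (or an equivalent) replacement to close the degenerate case.
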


\begin{proof}
For each $n\geq |\bf{m}|$, let $Q_{n,\bf{m}}$ be the polynomial $q_{n,\bf{m}}$ normalized so that
\begin{equation}\label{norm}
\sum\limits_{k=0}^{|\bf{m}|}|\lambda_{n,k}|=1,\qquad Q_{n,\bf{m}}(z)=\sum\limits_{k=0}^{|\bf{m}|}\lambda_{n,k}z^k.
\end{equation}
This normalization entails that for any fixed $j \in \mathbb{Z}_+$ the sequence of polynomials $(Q_{n,\bf{m}}^{(j)})_{n \geq |\textbf{m}|}$ is uniformly bounded on each compact subset of $\mathbb{C}$.

Let $\zeta$ be a system pole of order $\tau$ of $(\bf{f},\bf{m})$. Consider a polynomial combination $g_1$ of type \eqref{system pole} that is analytic on a neighborhood of $\overline{D}_{|\zeta|}$ except for a simple pole at $\zeta$ and verifies that $R_1(g_1)=R_{\zeta,1}(\bf{f},\bf{m})(=r_{\zeta,1}(\bf{f},\bf{m}))$. Then we have
\[
g_1=\sum\limits_{k=1}^{d}p_{k,1}f_k,\qquad \deg p_{k,1}<m_k,\qquad k=1,\ldots,|\bf{m}|,
\]
and
\[
Q_{n,\bf{m}}(z)h_1(z)-(z-\zeta)\sum\limits_{k=1}^{d}p_{k,1}(z)P_{n,\bf{m},k}(z)=Az^{n+1}+\cdots,
\]
where $h_1(z)=(z-\zeta)g_1(z)$. Hence, the function
\[
\frac{Q_{n,\bf{m}}(z)h_1(z)}{z^{n+1}}-\frac{z-\zeta}{z^{n+1}}\sum\limits_{k=1}^{d}p_{k,1}(z)P_{n,\bf{m},k}(z)
\]
is analytic on $D_1(g_1)$. Take $0<r<R_1(g_1)$, and set $\Gamma_r=\lbrace z\in\mathbb{C}:\ |z|=r\rbrace$. Using Cauchy's formula, we obtain
\[
Q_{n,\bf{m}}(z)h_1(z)-(z-\zeta)\sum\limits_{k=1}^{d}p_{k,1}P_{n,\bf{m},k}(z)=\frac{1}{2\pi i}\int_{\Gamma_r}\frac{z^{n+1}}{\omega^{n+1}}\frac{Q_{n,\bf{m}}(\omega)h_1(\omega)}{\omega-z}d\omega,
\]
for all $z$ with $|z|<r$, since $\deg \sum\limits_{k=1}^{d}p_{k,1}P_{n,\bf{m},k}<n$. In particular, taking $z=\zeta$ in the previous formula, we obtain
\begin{equation}\label{eq 80}
Q_{n,\bf{m}}(\zeta)h_1(\zeta)=\frac{1}{2\pi i}\int_{\Gamma_r}\frac{\zeta^{n+1}}{\omega^{n+1}}\frac{Q_{n,\bf{m}}(\omega)h_1(\omega)}{\omega-\zeta}d\omega.
\end{equation}
Then
\[
\limsup_{n\rightarrow\infty}|Q_{n,\bf{m}}(\zeta)h_1(\zeta)|^{1/n}\leq\frac{|\zeta|}{r}.
\]
Using that $h_1(\zeta)\neq 0$ and making $r$ tend to $R_1(g_1)$, we have
\[
\limsup_{n\rightarrow\infty}|Q_{n,\bf{m}}(\zeta)|^{1/n}\leq\frac{|\zeta|}{R_{\zeta,1}(\bf{f},\bf{m})}<1.
\]

Now, we use induction to prove that for each $s=0,\ldots,\tau-1$
\begin{equation}\label{eq induc}
\limsup_{n\rightarrow\infty}|Q_{n,\bf{m}}^{(s)}(\zeta)|^{1/n}\leq\frac{|\zeta|}{R_{\zeta,s+1}(\bf{f},\bf{m})}\leq \frac{|\zeta|}{R_{\zeta}(\bf{f},\bf{m})}.
\end{equation}
For $s=0$ the property is true as was shown above. Suppose that
\begin{equation}\label{eq 81}
\limsup_{n\rightarrow\infty}|Q_{n,\bf{m}}^{(j)}(\zeta)|^{1/n}\leq\frac{|\zeta|}{R_{\zeta,j+1}(\bf{f},\bf{m})},\ \ \ j=0,1,\ldots,s-2,
\end{equation}
where $R_{\zeta,j+1}({\bf f},{\bf m})=\min_{k=1,\ldots,j+1}r_{\zeta,k}({\bf f},{\bf m})$. Let us prove that  \eqref{eq 81} holds for $j=s-1$, with $s\leq\tau$.\

Consider a polynomial combination $g_s$ of type \eqref{system pole} that is analytic on a neighborhood of $\overline{D}_{|\zeta|}$ except for a pole of order $s$ at $z=\zeta$ and verifies that $R_s(g_s)=r_{\zeta,s}(\bf{f},\bf{m})$. Then,
\[
g_s=\sum\limits_{k=1}^{d}p_{k,s}f_k,\ \ \ \deg p_{k,s}<m_k,\ \ \ k=1,\ldots,|\bf{m}|.
\]
Set $h_s(z)=(z-\zeta)^sg_s(z)$. Reasoning as in the previous case, the function
\[
\frac{Q_{n,\bf{m}}(z)h_s(z)}{z^{n+1}(z-\zeta)^{s-1}}-\frac{z-\zeta}{z^{n+1}}\sum\limits_{k=1}^{d}p_{k,s}(z)P_{n,\bf{m},k}(z)
\]
is analytic on $D_s(g_s)\setminus\lbrace\zeta\rbrace$. Set $P_s=\sum\limits_{k=1}^{d}p_{k,s}P_{n,\bf{m},k}$. Fix an arbitrary compact set $\mathcal{K}\subset (D_s(g_s)\setminus\lbrace\zeta\rbrace)$. Take $\delta>0$ sufficiently small and $0<r<R_s(g_s)$ with $\mathcal{K}\subset D_r$. Using Cauchy's integral formula and the residue theorem,  since $\deg P_s<n$, for all $z\in\mathcal{K}$ we have
\begin{equation}\label{eq 82}
\frac{Q_{n,\bf{m}}(z)h_s(z)}{(z-\zeta)^{s-1}}-(z-\zeta)P_s(z)=I_n(z)-J_n(z),
\end{equation}
where
\[
I_n(z)=\frac{1}{2\pi i}\int_{\Gamma_r}\frac{z^{n+1}}{\omega^{n+1}}\frac{Q_{n,\bf{m}}(\omega)h_s(\omega)}{(\omega-\zeta)^{s-1}(\omega-z)}d\omega
\]
and
\[
J_n(z)=\frac{1}{2\pi i}\int_{|\omega-\zeta|=\delta}\frac{z^{n+1}}{\omega^{n+1}}\frac{Q_{n,\bf{m}}(\omega)h_s(\omega)}{(\omega-\zeta)^{s-1}(\omega-z)}d\omega.
\]
The first integral $I_n$ is estimated as in \eqref{eq 80} to obtain
\begin{equation}\label{eq 83}
\limsup_{n\rightarrow\infty}\|I_n(z)\|_{\mathcal{K}}^{1/n}\leq\frac{\|z\|_{\mathcal{K}}}{R_s(g_s)}=\frac{\|z\|_{\mathcal{K}}}{r_{\zeta,s}(\bf{f},\bf{m})}.
\end{equation}
For $J_n(z)$, as $\deg Q_{n,\bf{m}}\leq |\bf{m}|$, write
\[
Q_{n,\bf{m}}(\omega)=\sum\limits_{j=0}^{|\bf{m}|}\frac{Q_{n,\bf{m}}^{(j)}(\zeta)}{j!}(\omega-\zeta)^j.
\]
Then
\begin{equation}\label{eq 84}
J_n(z)=\sum\limits_{j=0}^{s-2}\frac{1}{2\pi i}\int_{|\omega-\zeta|=\delta}\frac{z^{n+1}}{\omega^{n+1}}\frac{h_s(\omega)}{(\omega-\zeta)^{s-1-j}}\frac{Q_{n,\bf{m}}^{(j)}(\zeta)}{j!(\omega-z)}d\omega.
\end{equation}
Using the induction hypothesis \eqref{eq 81} and making $\varepsilon$ tend to zero, we obtain
\[
\limsup_{n\rightarrow\infty}\|J_n(z)\|_{\mathcal{K}}^{1/n}\leq\frac{\|z\|_{\mathcal{K}}}{|\zeta|}\frac{|\zeta|}{R_{\zeta,s-1}(\bf{f},\bf{m})}=\frac{\|z\|_{\mathcal{K}}}{R_{\zeta,s-1}(\bf{f},\bf{m})},
\]
which, together with \eqref{eq 82} and \eqref{eq 83}, gives
\begin{equation}\label{eq 85}
\limsup_{n\rightarrow\infty}\|Q_{n,\bf{m}}(z)h_s(z)-(z-\zeta)^sP_s(z)\|_{\mathcal{K}}^{1/n}\leq\frac{\|z\|_{\mathcal{K}}}{R_{\zeta,s-1}(\bf{f},\bf{m})}.
\end{equation}

As the function inside the norm in \eqref{eq 85} is analytic in $D_s(g_s)$, inequality \eqref{eq 85} also holds for any compact set $\mathcal{K}\subset D_s(g_s)$. Moreover, we can differentiate $s-1$ times that function and the inequality remains true by virtue of Cauchy's integral formula. So, taking $z=\zeta$ in \eqref{eq 85} for the differentiated version, we obtain
\[
\limsup_{n\rightarrow\infty}|(Q_{n,\bf{m}}h_s)^{(s-1)}(\zeta)|^{1/n}\leq\frac{|\zeta|}{R_{\zeta,s}(\bf{f},\bf{m})}.
\]
Using the Leibniz formula for higher derivatives of a product of two functions and the induction hypothesis \eqref{eq 81}, we arrive at
\begin{equation}\label{eq 86}
\limsup_{n\rightarrow\infty}|Q_{n,\bf{m}}^{(s-1)}(\zeta)|^{1/n}\leq\frac{|\zeta|}{R_{\zeta,s}(\bf{f},\bf{m})}\leq \frac{|\zeta|}{R_{\zeta}(\bf{f},\bf{m})},
\end{equation}
since $h_s(\zeta)\neq 0$. This completes the induction.

Now, let us prove that $\lambda(\zeta) \geq \tau.$ It is sufficient to show that for any  subsequence of indices $\Lambda$ such that
\[\lim_{n\in \Lambda} Q_{n,\textbf{m}} = Q_{\Lambda},\]
$Q_{\Lambda}$ is a non null polynomial with a zero of multiplicity $\geq \tau$ at $\zeta$. Indeed, $Q_{\Lambda}\not \equiv 0$ due to the normalization on the polynomials $Q_{n,\textbf{m}}$. On the other hand,
\[ Q_{n,{\bf m}}(z) =   \sum_{k=0}^{|{\bf m}|} \frac{Q_{n,{\bf m}}^{(k)}(\zeta)}{k!} (z-\zeta)^k.
\]
Using \eqref{eq induc} and Weierstrass' theorem for the derivatives it follows that
\[\lim_{n\in \Lambda} Q_{n,{\bf m}}(z) = Q_{\Lambda}(z) =   \sum_{k=\tau}^{|{\bf m}|} \frac{Q_{\Lambda}^{(k)}(\zeta)}{k!} (z-\zeta)^k,
\]
as needed.

 Set $U_\varepsilon = \{z: |z-\zeta| <\varepsilon \}$. Let $\varepsilon$ be sufficiently small so that $U_{2\varepsilon}$ contains no other system pole of $(\bf{f},\bf{m})$ except $\zeta$.
Let $\zeta_{n,1},\ldots,\zeta_{n,\lambda_n}$ be the zeros of $Q_{n,\bf{m}}$ contained in $U_{2\varepsilon}$. Since $\lambda(\zeta) \geq \tau$, we have  $\tau \leq \lambda_n \leq |\bf{m}|$ for all sufficiently large $n$. In the sequel we only consider such values of $n$. Set
\[\widetilde{Q}_{n}(z) = \prod_{k=1}^{\lambda_n}(z - \zeta_{n,k}).\]

It is easy to see that the  functions $\widetilde{Q}_{n}/Q_{n,\bf{m}}$ are holomorphic in $ {U}_{2\varepsilon}$ and uniformly bounded on any compact subset of ${U}_{2\varepsilon}$; in particular on $ \overline{U}_{\varepsilon}$. Therefore, for any $k\geq 0$ the sequence $\left(\widetilde{Q}_{n}/Q_{n,\bf{m}}\right)^{(k)}$ is uniformly bounded on $\overline{U}_{\varepsilon}$. Since
\[\widetilde{Q}_{n} = Q_{n,\bf{m}} \frac{\widetilde{Q}_{n}}{Q_{n,\bf{m}}},\]
from \eqref{eq induc} it readily follows that for each $s=0,\ldots,\tau-1$
\begin{equation}\label{eq induc2}
\limsup_{n\rightarrow\infty}|\widetilde{Q}_{n}^{(s)}(\zeta)|^{1/n} \leq \frac{|\zeta|}{R_{\zeta}(\bf{f},\bf{m})} < 1.
\end{equation}

Now, using \eqref{eq induc2} for $s=0$ and the ordering imposed on the indexing of the zeros of $q_{n,\bf{m}}$ it follows that
\[ \limsup_{n\to \infty} |\zeta -\zeta_{n,1}|^{1/n} < 1
\]
so that $\mu(\zeta) \geq 1$. Let us assume that for each $j=1,\ldots,k$ where $k\leq \tau-1,$
\begin{equation}
\label{deriv}
 \limsup_{n\to \infty} |\zeta -\zeta_{n,j}|^{1/n} < 1,
\end{equation}
and let us show that it is also true for $k+1$. Consider $\widetilde{Q}_n^{(k)}(\zeta)$. One of the terms thus obtained is $\prod_{j=k+1}^{\lambda_n}(\zeta-\zeta_{n,j})$, each one of the other terms contains at least one factor of the form $(\zeta-\zeta_{n,j}), j=1,\ldots,k$. Combining \eqref{eq induc2} and \eqref{deriv} it follows that
\[ \limsup_{n\to \infty} |\prod_{j=k+1}^{\lambda_n}(\zeta-\zeta_{n,j})|^{1/n} < 1,
\]
and due to the ordering of the indices, we get
\[
 \limsup_{n\to \infty} |\zeta -\zeta_{n,k+1}|^{1/n} < 1.
\]
Consequently, $\mu(\zeta) \geq \tau$ as we wanted to prove.
\end{proof}

\section{Auxiliary results and notions}

\subsection{Incomplete Pad\'e approximants}
The notion of incomplete Pad\'e approximation introduced in \cite{cacoq1} played a central role in the proof of Theorem \ref{simultaneo}.

\begin{Definition}\label{def incomp}
Let $f(z) = \sum_{k=0}^{\infty} \phi_k z^k$ be a formal Taylor expansion about the origin. Fix $m\geq m^*\geq 1$. Let $n\geq m$, we say that the rational function $r_{n,m}$ is an incomplete Pad\'e approximation of type
$(n,m,m^*)$ with respect to $f$ if $r_{n,m}$ is the quotient of any two polynomials $p$ and $q$ that verify
\begin{enumerate}
\item[{\rm (c.1)}] $deg(p)\leq n-m^*$,\quad $deg(q)\leq m$, \quad $q\not\equiv 0$,
\item[{\rm (c.2)}] $q(z)f(z)-p(z)=Az^{n+1}+\cdots$.
\end{enumerate}
\end{Definition}
Given $(n,m,m^*)$, $n\geq m\geq m^*$, the Pad\'e approximants $R_{n,m^*},...,R_{n,m}$ can all be regarded as incomplete Pad\'e approximation of type $(n,m,m^*)$ of $f$. In particular, this means that $r_{n,m}$ is not uniquely determined (in general) when $m^* <m$. Therefore, when we refer to such approximants we understand that once we fix $m$ and $m^*$ for each given $n$ a candidate is chosen. This liberty is the main advantage of incomplete Pad\'e approximation. For example, notice that according to the definition of Hermite Pad\'e approximation $R_{n,{\bf m},k}$ is an incomplete Pad\'e approximation of type $(n,|{\bf m}|,m_k)$ of the $k$th component $f_k$ of the vector  $\bf f$.\

Canceling out common factors between $p$ and $q$, we write $r_{n,m}=p_{n,m}/q_{n,m}$, where $q_{n,m}$ is normalized as follows
\begin{equation}\label{normalizacion}
q_{n,m}(z) =\prod\limits_{|\zeta_{n,k}|<1}{(z-\zeta_{n,k})}\prod\limits_{|\zeta_{n,k}|\geq1}{(1-z/\zeta_{n,k})}.
\end{equation}\
With this normalization, it is easy to check that on any compact subset $\mathcal{K}$ of $\mathbb{C}$
\begin{equation}\label{normalization2}
\Vert q_{n,m}\Vert_{\mathcal{K}}:=\max\limits_{z\in\mathcal{K}}\left|q_{n,m}(z)\right|\leq C<\infty,
\end{equation}
where  $C$ is a constant that is independent of $n\in\mathbb{N}$ (but depends on $\mathcal{K}$).

Suppose that $p$ and $q$ have a common zero at $z=0$ of order $\lambda_n$. Notice that $0\leq\lambda_n\leq m$. Then
\begin{enumerate}
\item[(c.3)] $deg(p_{n,m})\leq n-m^*-\lambda_n$,\quad $deg(q_{n,m})\leq m-\lambda_n$,\quad $q_{n,m}\not\equiv 0$,
\item[(c.4)] $q_{n,m}(z)f(z)-p_{n,m}(z)=Az^{n+1-\lambda_n}+\cdots$.
\end{enumerate}
From the definition it is easy to prove that
\begin{equation}\label{eq 1}
r_{n+1,m}-r_{n,m}=\frac{A_{n,m}z^{n+1-\lambda_n-\lambda_{n+1}}q_{n,m-m^*}^*}{q_{n,m}q_{n+1,m}},
\end{equation}
where $A_{n,m}$ is a constant and $q_{n,m-m^*}^*$ is a polynomial of degree less than or equal to $m-m^*$ normalized as in \eqref{normalizacion}.\

We introduce a notion  of convergence which will be very useful in the sequel.
\begin{Definition}\label{def 2}
Let $E$ be a subset of the complex plane $\mathbb{C}$. By $\mathcal{U}(E)$ we denote the class of all coverings of $E$ by at most a numerable set of disks. Set
\[
\sigma_1(E):=inf \left\lbrace \sum\limits_{\nu=1}\limits^\infty |U_\nu|:\ \ \left\lbrace U_\nu \right\rbrace \in \mathcal{U}(E)\right\rbrace
\]
where $|U_\nu|$ denotes the radius of the disk $U_\nu$. The quantity $\sigma_1(E)$ is called the $\sigma_1$ content of   $E$.
\end{Definition}

\begin{Definition}\label{def 3}
Let $\varphi$ and $\varphi_n$, $n\in \mathbb{Z}_+$, be functions defined on a region $\Omega\subset \mathbb{C}$. We say that the sequence $( \varphi_n)_{n\geq 0}$ converges $\sigma_1$ on each compact subset $\mathcal{K}\subset\Omega$ to $\varphi$ if for every $\mathcal{K}\subset\Omega$ and $\varepsilon>0$
\[
\lim\limits_{n\longrightarrow\infty}\sigma_1\left\lbrace z\in \mathcal{K}:\ |(\varphi_n-\varphi)(z)|\geq\varepsilon\right\rbrace=0.
\]\
We denote this by
\[
\sigma_1-\lim\limits_n\varphi_n=\varphi,\qquad \mathcal{K}\subset\Omega.
\]
\end{Definition}

Using telescopic sums, equation \eqref{eq 1} implies that $\sigma_1$ convergence of the sequence $(r_{n,m})_{n\geq m}$ can be reduced to the $\sigma_1$ convergence of the series
\[
\sum\limits_{n=m}\limits^{\infty}\frac{A_{n,m}z^{n+1-\lambda_n-\lambda_{n+1}}q_{n,m-m^*}^*(z)}{(q_{n,m}q_{n+1,m})(z)},\ \ \ 0\leq \lambda_n\leq m.
\]
Define
\begin{equation}
\label{Rm*}
R_m^*(f)=\frac{1}{\limsup\limits_{n\longrightarrow\infty}|A_{n,m}|^{1/n}},\ \ \ D_m^*(f)=\lbrace z:\ \left|z\right|<R_m^*(f)\rbrace.
\end{equation}

We will use some properties of incomplete Pad\'e approximants, proved in \cite{cacoq1} and \cite{cacoq2}, which we summarize in the next two propositions.
\begin{Proposition}\label{teo cacoq}
Let $f$ be a formal power series. Fix $m$ and $m^*$ nonnegative integers, $m\geq m^*$. Let $(r_{n,m})_{n\geq m}$ be a sequence of incomplete Pad\'e approximants of type $(n,m,m^*)$ for $f$. If $R_m^*(f)>0$ then $R_0(f)>0$. Moreover,
\[
D_{m^*}(f)\subset D_m^*(f)\subset D_m(f)
\]
and $D_m^*(f)$ is the largest disk in compact subsets of which $\sigma_1-\lim\limits_{n\longrightarrow\infty}r_{n,m}=f$. Moreover, the sequence $(r_{n,m})_{n\geq m}$ is pointwise divergent in $\lbrace z:\ \left| z\right| > R_m^*(f)\rbrace$ except on a set of $\sigma_1-$content zero.
\end{Proposition}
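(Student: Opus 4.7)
The plan is to analyze the telescopic decomposition $r_{n,m} = r_{m,m} + \sum_{k=m}^{n-1}(r_{k+1,m}-r_{k,m})$ using the explicit difference formula \eqref{eq 1}. For a compact $\mathcal{K}\subset D_m^*(f)$, fix $r$ with $\max_{z\in\mathcal{K}}|z|<r<R_m^*(f)$. By the definition \eqref{Rm*} and the uniform bound \eqref{normalization2} applied to $q^*_{n,m-m^*}$, the numerator in \eqref{eq 1} is dominated on $\mathcal{K}$ by $C(r/R_m^*(f))^n$ for all sufficiently large $n$. The denominator $q_{n,m}q_{n+1,m}$ has at most $2m$ zeros; thanks to the specific normalization \eqref{normalizacion} one obtains a lower bound $|q_{n,m}(z)q_{n+1,m}(z)|\geq c_\mathcal{K}\,\delta_n^{2m}$ off a union $E_n$ of at most $2m$ disks of radius $\delta_n$ around those zeros. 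Choosing $\delta_n=\eta^n$ with $r/R_m^*(f)<\eta<1$ makes $|r_{n+1,m}-r_{n,m}|$ geometrically small on $\mathcal{K}\setminus E_n$, while $\sigma_1(\bigcup_{k\geq N}E_k)\leq 2m\sum_{k\geq N}\delta_k\to 0$ as $N\to\infty$. This delivers $\sigma_1\text{-}\lim_n r_{n,m}$ on compacts of $D_m^*(f)$; specializing $\mathcal{K}$ to a small disk at the origin yields $R_0(f)>0$ and identifies the limit with the Taylor series of $f$.

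The inclusion $D_m^*(f)\subset D_m(f)$ then follows by a normal-family argument: each $r_{n,m}$ is rational with at most $m$ poles, the $q_{n,m}$ are uniformly bounded on compacts by \eqref{normalization2}, and off the limit set of their zeros the $\sigma_1$-limit is actually attained, so $f$ extends meromorphically to $D_m^*(f)$ with at most $m$ poles. For $D_{m^*}(f)\subset D_m^*(f)$, note that the ordinary Pad\'e approximant $R_{n,m^*}$ is always an admissible choice of incomplete Pad\'e approximant of type $(n,m,m^*)$; Gonchar's classical scalar theorem (the $d=1$ case of Theorem \ref{simultaneo}) applied on $D_{m^*}(f)$ bounds $\limsup_n\|R_{n+1,m^*}-R_{n,m^*}\|_\mathcal{K}^{1/n}$, and feeding this back into \eqref{eq 1} solved for $A_{n,m}$ yields $\limsup_n|A_{n,m}|^{1/n}\leq 1/R_{m^*}(f)$, i.e.\ $R_m^*(f)\geq R_{m^*}(f)$.

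For maximality of $D_m^*(f)$ and divergence outside, if $\sigma_1\text{-}\lim_n r_{n,m}=f$ held on compacts of some disk of radius $R>R_m^*(f)$, then solving \eqref{eq 1} for $A_{n,m}$ and bounding $|q_{n,m}q_{n+1,m}|$ from below off an exceptional set of vanishing $\sigma_1$-content would force $\limsup_n|A_{n,m}|^{1/n}\leq 1/R$, contradicting \eqref{Rm*}. Read pointwise, the same estimate (using that the general term of \eqref{eq 1} grows geometrically in modulus for $|z|>R_m^*(f)$ off the zeros of the denominators) yields divergence of $(r_{n,m})$ on $\{|z|>R_m^*(f)\}$ except on a $\sigma_1$-null set.

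The main obstacle is the covering step inside $D_m^*(f)$: one must choose the exclusion radii $\delta_n$ so that \emph{simultaneously} the geometric factor $(r/R_m^*(f))^n/\delta_n^{2m}$ is summable and $\sigma_1(\bigcup_n E_n)\to 0$. The Cartan-type lemma tailored to the normalization \eqref{normalizacion} (already exploited in \cite{cacoq1}) is the technical heart of the result; the rest consists of routine manipulations of \eqref{eq 1} together with standard convergence theorems for Pad\'e approximants.
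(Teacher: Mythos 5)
First, a point of reference: the paper itself offers no proof of Proposition \ref{teo cacoq}; it is imported from \cite{cacoq1} and \cite{cacoq2}, so your attempt can only be measured against the argument in those sources. Your core mechanism --- telescoping $r_{n,m}$ via \eqref{eq 1}, bounding the numerators through \eqref{Rm*} and \eqref{normalization2}, and excising the zeros of $q_{n,m}q_{n+1,m}$ by a Cartan-type family of disks with summable radii --- is indeed the mechanism used there, and your sketch of the divergence statement outside the closed disk of radius $R_m^*(f)$ follows the same pattern. (One quantitative slip: with exclusion radii $\delta_n=\eta^n$ you need $\eta^{2m}>r/R_m^*(f)$, not merely $\eta>r/R_m^*(f)$, for $(r/R_m^*(f))^n\delta_n^{-2m}$ to be summable; harmless, since $\eta$ may be taken close to $1$.)

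Two steps, however, have genuine gaps. The more serious one is your proof of $D_{m^*}(f)\subset D_m^*(f)$. The quantity $R_m^*(f)$ in \eqref{Rm*} is built from the coefficients $A_{n,m}$ of the \emph{particular} sequence $(r_{n,m})$ fixed once and for all; replacing it by the classical Pad\'e approximants $R_{n,m^*}$ produces a different sequence with different $A_{n,m}$, so a bound obtained for that special choice says nothing about the given one. Worse, the rate estimate \eqref{ecuac 4} you invoke is only available when $f$ has exactly $m^*$ poles in $D_{m^*}(f)$ (it is part of the equivalence in Theorem \ref{simultaneo}); in general the row $(R_{n,m^*})_n$ need not converge at all. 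The actual argument estimates the given $A_{n,m}$ directly from relation (c.4): one writes $A_{n,m}z^{n+1-\lambda_n-\lambda_{n+1}}q^*_{n,m-m^*}=q_{n+1,m}(q_{n,m}f-p_{n,m})-q_{n,m}(q_{n+1,m}f-p_{n+1,m})$, multiplies by the polynomial $w$ whose zeros are the at most $m^*$ poles of $f$ in $D_{m^*}(f)$, and extracts the relevant Taylor coefficient by Cauchy's formula on circles of radius $\rho$ tending to $R_{m^*}(f)$. The second gap is the sentence claiming that specializing $\mathcal{K}$ to a small disk at the origin ``yields $R_0(f)>0$ and identifies the limit with the Taylor series of $f$.'' The telescoping only shows that $(r_{n,m})$ has a $\sigma_1$-limit $g$ on compact subsets of $D_m^*(f)$; that the germ of $g$ at $0$ coincides with the formal series $f$ (equivalently, that $f$ converges near the origin) requires a separate argument from (c.4), controlling the factors $z^{\lambda_n}$ and possible accumulation of zeros of $q_{n,m}$ at $0$. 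The paper explicitly singles this out as one of the main difficulties of the inverse problem, so it cannot be dismissed as routine.
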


When dealing with inverse type problems, one of the main difficulties is to determine from the data if the formal expansion represents an analytic element in a vicinity of the origin; that is, if the formal expansion is indeed convergent about $z=0$. The previous proposition says that a sufficient condition is that $R_m^*(f) > 0$. Notice that in that result the convergence of the denominators of the incomplete Pad\'e approximants is not required. When this is true some additional information can be drawn. A direct consequence of \cite[Corollary 2.4]{cacoq2} establishes

\begin{Proposition} \label{rmayor0} Let $f$ be a formal power series that is not a polynomial. Fix $m \geq m^* \geq 1$.
Let $(r_{n,m})_{n\geq m}, r_{n,m}= {p}_{n,m}/ {q}_{n,m},$ be a sequence of incomplete
Pad\'e approximants of type $(n,m,m^*)$ corresponding to $f$.
Assume that there exists a polynomial $ {q}_m$ of degree
 $ m,\,  {q}_m(0) \neq 0,$ such that
\begin{equation}
\label{cond incomp}
\lim_{n
\to \infty}  {q}_{n,m} =  {q}_m.
\end{equation}
Then, $0<R_0(f)<\infty$
and the zeros of $ {q}_{m}$ contain all the poles, counting
multiplicities, that $f$ has in $D^*_{m}(f)$.
\end{Proposition}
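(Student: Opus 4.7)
The plan is to deduce both conclusions from $q_{n,m}\to q_m$ together with Proposition \ref{teo cacoq}, proceeding in three stages.

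I would first establish $R_0(f)>0$ by exhibiting a linear recurrence on the Taylor coefficients $(\phi_n)$. Since $q_m(0)\neq 0$ and $q_{n,m}\to q_m$ of degree exactly $m$, for $n$ large $q_{n,m}$ has degree $m$, $\lambda_n=0$, and the constant term $q_{n,m,0}$ is bounded away from zero. Equating the coefficient of $z^n$ on both sides of (c.4) (which vanishes on the right since $\deg p_{n,m}\leq n-m^*\leq n-1$) yields
\[
\sum_{k=0}^m q_{n,m,k}\,\phi_{n-k}=0,
\]
and therefore $|\phi_n|\leq C\max_{1\leq k\leq m}|\phi_{n-k}|$ for a constant $C$ independent of $n$; iteration gives at most exponential growth, so $R_0(f)>0$. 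The same bound applied to $|A_{n,m}|=|\sum_k q_{n,m,k}\phi_{n+1-k}|$ gives $R_m^*(f)>0$, so Proposition \ref{teo cacoq} furnishes $D_m^*(f)\subset D_m(f)$ together with $\sigma_1$-$\lim_n r_{n,m}=f$ on compact subsets of $D_m^*(f)$.

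Next I would show that the zeros of $q_m$ capture the poles of $f$ in $D_m^*(f)$. Given a pole $z_0\in D_m^*(f)$ of order $\tau$, choose a closed disk $\overline{B(z_0,\delta)}\subset D_m^*(f)$ containing no other pole of $f$ and no zero of $q_m$ besides possibly $z_0$. Uniform convergence $q_{n,m}\to q_m$ on $\overline{B(z_0,\delta)}$ forces every zero of $q_{n,m}$ inside $B(z_0,\delta)$ to cluster at $z_0$ as $n\to\infty$. A Hurwitz-type count, applied to $1/r_{n,m}\to 1/f$ on $\partial B(z_0,\delta)$ and using the $\sigma_1$-convergence (extracted via the integral representation of $f-r_{n,m}$ inherent in the proof of Proposition \ref{teo cacoq}), then shows that $q_{n,m}$ carries at least $\tau$ zeros inside $B(z_0,\delta)$ for all large $n$. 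Passing to the limit, $q_m$ acquires a zero of order at least $\tau$ at $z_0$.

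Finally, for $R_0(f)<\infty$: the recurrence coefficients $q_{n,m,k}$ converge to $q_{m,k}$, and the limiting characteristic polynomial $t^m q_m(1/t)$ has roots $1/\xi_l$, where $\xi_l$ are the finite, nonzero zeros of $q_m$. By a Poincar\'e--Perron-type theorem, any nontrivial solution $(\phi_n)$ of such a perturbed recurrence satisfies $\limsup_n|\phi_n|^{1/n}=1/|\xi_l|$ for some $l$; non-triviality is guaranteed because $f$ is not a polynomial, so $R_0(f)\leq|\xi_{\max}|<\infty$. The main obstacle in the whole plan is the pole-counting stage: $\sigma_1$-convergence is strictly weaker than uniform convergence, so passing from it to a Hurwitz-type conclusion near $z_0$ requires careful use of the integral representation from the proof of Proposition \ref{teo cacoq}; the Poincar\'e--Perron step likewise needs attention since only the bare convergence $q_{n,m,k}\to q_{m,k}$ is given, with no specified rate.
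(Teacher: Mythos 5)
The paper does not actually prove this proposition: it is stated as ``a direct consequence of \cite[Corollary 2.4]{cacoq2}'', so your blind reconstruction is necessarily a different route from the paper's (which is a citation). Your three-stage plan is essentially the right one and, as far as I can tell, it is the content of the argument in the cited reference: the Hadamard--Kronecker recurrence $\sum_{k=0}^m q_{n,m,k}\phi_{n-k}=0$ (valid for large $n$ because $\deg q_m=m$ forces $\deg q_{n,m}=m$ and hence $\lambda_n=0$, and $m^*\geq 1$ kills the coefficient of $z^n$ in $p_{n,m}$) with $q_{n,m,0}\to q_m(0)\neq 0$ gives $R_0(f)>0$; a Perron--Kreuser theorem for the same perturbed recurrence, whose limiting characteristic roots $1/\xi_l$ are all finite and nonzero, gives $R_0(f)<\infty$ once $f$ is not a polynomial; and pole attraction inside $D_m^*(f)$ follows from $\sigma_1$-convergence plus $q_{n,m}\to q_m$.

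Two points need shoring up. First, your identification $A_{n,m}=\sum_k q_{n,m,k}\phi_{n+1-k}$ is not literal: from \eqref{eq 1}, $A_{n,m}q^*_{n,m-m^*}$ equals $(q_{n+1,m}p_{n,m}-q_{n,m}p_{n+1,m})/z^{n+1}$, so $A_{n,m}$ carries the factor $1/q^*_{n,m-m^*}(0)$, which may vanish; you need the observation that the normalization \eqref{normalizacion} forces $\|q^*_{n,m-m^*}\|_{\overline{D}_1}\geq 1$ (mean value of $\log|q^*|$ on the unit circle is $0$) together with a crude bound on $\|p_{n,m}\|$ coming from $R_0(f)>0$ to conclude $\limsup_n|A_{n,m}|^{1/n}<\infty$. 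Alternatively you may skip this entirely: if $R_m^*(f)=0$ then $D_m^*(f)=\emptyset$ and the pole statement is vacuous. Second, the obstacle you flag in the pole-counting stage is real but is exactly what \cite[Lemma 1]{gonchar1975} resolves (and what this paper invokes for the same purpose in the proof of Theorem \ref{DD}): $\sigma_1$-convergence of rational functions with uniformly bounded number of poles to a meromorphic $f$ upgrades to uniform convergence off the poles and forces each pole of order $\tau$ to attract $\geq\tau$ zeros of $q_{n,m}$, after which Hurwitz applied to $q_{n,m}\to q_m$ gives the multiplicity statement. With that lemma quoted rather than re-derived, your proof closes.
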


Therefore, incomplete Pad\'e approximation allows to recover the poles of $f$ inside $D_m^*(f)$. When $m^* =m$ we are in the case of Pad\'e approximation and Suetin's theorem says that all the zeros of $q_m$ are singular points of $f$ lying in the closure of  $D_{m-1}(f)$. For truly incomplete Pad\'e approximants $(m^* < m)$, what can be said about the zeros of $q_m$ in relation with the singular points of $f$? We know that not all of them need to be singular points as can be deduced from the examples in \cite[Section 5]{cacoq1}, but all the poles of $f$ in $D_m^*(f)$ are zeros of $q_m$ (counting multiplicities). However, $f$ may have less than $m^*$ poles in $D_{m}^*(f)$. In this situation, do the zeros of $q_m$ contain some singularities of $f$ lying on the boundary of $D_m^*(f)$?

\subsection{Two fundamental lemmas}

In the study of singular points on the boundary of the convergence region of Taylor and Dirichlet series an important instrument is what is called a regularization of the sequence of its coefficients. The proof of the statements (i)-(iv) below may be found in \cite{agmon} and \cite{dirichlet}.

Let $(\alpha_n)_{n\geq 1}$ be a sequence of complex numbers such that
\[
\limsup\limits_{n\rightarrow\infty}|\alpha_{n}|^{1/n}=1.
\]
Then, there exists a sequence $(\alpha_n^*)_{n\geq 1}$ of positive numbers which satisfies:
\begin{enumerate}	
\item[(i)]\label{i} $\lim\limits_{n\rightarrow\infty}\frac{\alpha_{n}^*}{{\alpha_{n+1}^*}}=1$,
\item[(ii)]\label{ii} $(log(\frac{\alpha_{n}^*}{n!}))_{n\geq m}$ is concave,
\item[(iii)]\label{iii} $|\alpha_{n}|\leq |\alpha_{n}^*|,\ n\in\mathbb{Z}_+$,
\item[(iv)]\label{iv} $|\alpha_{n}|\geq c|\alpha_{n}^*|,\ n\in\Lambda\subset\mathbb{Z}_+,\ c>0$ for an infinite sequence $\Lambda$ of indices.
\end{enumerate}
called a regularization of $(\alpha_n)_{n\geq 1}$. If $\limsup_{n\to \infty} |\widetilde{\alpha}_n|^{1/n} = 1/r, 0 < r < \infty,$ then a {\bf regularization} of $(\widetilde{\alpha}_n)_{n\geq 1}$ is that sequence of numbers $(\alpha_n^*)_{n\geq 1}$ satisfying (i)-(iv) with $\alpha_n = \widetilde{\alpha}_nr^n$.

In \cite{suetin1985}, S.P. Suetin extended the use of regularizing sequences to Pad\'e approximation in order to prove the inverse result stated above (see \eqref{condSue}). His arguments were based on two lemmas \cite[Lemmas 1, 2]{suetin1985}. These lemmas may be adjusted for the  study of singularities in the case of incomplete Pad\'e approximation.  .

The first  lemma concerns bounds related with incomplete Pad\'e approximants on compact subsets of the complement of the circle $\{z:|z| = R_m^*\}$ defining $D_m^*$, see \eqref{Rm*}.  We will assume that $0 < R_m^* < +\infty$. In this case, making a change of variables if necessary, we can assume that $R_m^* = 1$.

\begin{Lemma}\label{lema 1}
Let $f$ be a formal power series. Fix $m\geq m^*\geq 1$ and assume that
\[
\limsup\limits_{n\rightarrow\infty}|A_{n,m}|^{1/n}=1,
\]
where the coefficients $A_{n,m}$ are those appearing in \eqref{eq 1}. Let $\left( A_{n,m}^*\right)_{n\geq m}$ be a regularizing sequence associated with $\left( A_{n,m}\right)_{n\geq m}$. Then
\begin{enumerate}
\item[1.]
for any $\delta>0$
\begin{equation}\label{eq 01}
\max\limits_{|z|\geq e^\delta}\left|\frac{p_{n,m}(z)}{A_{n,m}^*z^n}\right|=\mathcal{O}(1),\ \ \ n\xrightarrow[]{}\infty\
\end{equation}
\item[2.]
for every compact $ \mathcal{K}\subset \{z: |z|< e^{-\delta}\}\setminus \mathcal{P}(f)$, where $\mathcal{P}(f)$ is the set of poles of $f$,
\begin{equation}\label{eq 02}
\max\limits_{z\in \mathcal{K}}\left|\frac{(q_{n,m}f-p_{n,m})(z)}{A_{n,m}^*z^n}\right|=\mathcal{O}(1),\ \ \ n\xrightarrow[]{}\infty.
\end{equation}
\end{enumerate}
\end{Lemma}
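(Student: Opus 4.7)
My plan is to mirror Suetin's proof of \cite[Lemma 1]{suetin1985}, which establishes the analogous bounds for classical Pad\'e approximants, and carry it over to incomplete Pad\'e approximants. Only two structural novelties appear in the incomplete setting: the difference identity \eqref{eq 1} carries an additional polynomial factor $q_{n,m-m^*}^*$, and the exponent of $z$ is shifted by $\lambda_n+\lambda_{n+1}\le 2m$. Both are harmless because \eqref{normalization2} gives a uniform bound for $q_{n,m-m^*}^*$ on every compact set and because the shift $\lambda_n+\lambda_{n+1}$ only contributes a bounded multiplicative factor on $\{|z|=e^{\pm\delta}\}$.

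The first step is to rearrange \eqref{eq 1} as the polynomial identity
$$p_{n+1,m}(z)\,q_{n,m}(z)-p_{n,m}(z)\,q_{n+1,m}(z)=A_{n,m}\,z^{n+1-\lambda_n-\lambda_{n+1}}\,q_{n,m-m^*}^*(z)$$
and telescope from a fixed initial index $n_0\ge m$ up to $n$, producing representations of $p_{n,m}$ and of $q_{n,m}f-p_{n,m}$ as finite sums each of whose terms is proportional to some $A_{k,m}$. The second step is to divide through by $A_{n,m}^*z^n$ and invoke the regularizing properties (i)--(iv): property (iii) gives $|A_{k,m}|\le |A_{k,m}^*|$, while the concavity in (ii) combined with the slow variation in (i) forces the ratios $A_{k,m}^*/A_{n,m}^*$, multiplied respectively by $|z|^{k-n}$ (on $\{|z|\ge e^\delta\}$, part 1) or $|z|^{n-k}$ (on $\{|z|\le e^{-\delta}\}$, part 2), to decay geometrically; this makes the telescoped sums absolutely convergent with a bound independent of $n$. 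The third step is to apply Cauchy's integral formula to $(q_{n,m}f-p_{n,m})(z)/z^{n+1-\lambda_n}$, which by (c.4) is analytic on $D_0(f)\setminus\mathcal{P}(f)$: integration on circles of radii $r'<1<r''$ and deformation -- outwards for part 1 and inwards for part 2, picking up residues at the finitely many poles of $f$ in $\{|z|<e^{-\delta}\}$, which is why $\mathcal{P}(f)$ is excluded in the statement -- yields the two required estimates.

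The main obstacle is the quantitative content of the second step: converting the qualitative properties (i)--(ii) of the regularization into the geometric decay of the ratios $A_{k,m}^*/A_{n,m}^*$. This is the crux of Suetin's argument and the reason the regularizing sequence is introduced in the first place; the concavity of $\log(A_{n,m}^*/n!)$ pins down $A_{k,m}^*/A_{n,m}^*$ to behave essentially like $(k/n)^{n-k}$ times a slowly varying factor, which is exactly the geometric gain needed to swallow $|z|^{\pm(n-k)}$ and sum. The only incomplete-case check is that the extra factor $q_{n,m-m^*}^*$ does not disturb this mechanism, and this is immediate from \eqref{normalization2}.
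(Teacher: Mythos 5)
Your outline captures the surface of Suetin's argument (telescoping \eqref{eq 1}, properties (i)--(iv), maximum principle), but it omits the step that carries essentially all of the difficulty and misidentifies where the geometric decay comes from. First, nothing in the hypotheses controls the location of the zeros of the denominators $q_{k,m}$: no convergence of $(q_{n,m})_n$ is assumed in this lemma, so the terms of your telescoped sum contain factors $1/(q_{k,m}q_{k+1,m})(z)$ that can blow up at points accumulating anywhere. The paper's proof is devoted almost entirely to this: around each $z'$ one deletes $(\varepsilon/(4mk^2))$-neighborhoods of the zeros of $q_{k,m}q_{k+1,m}$, whose radii are summable with total less than $r$, finds for each $n$ a circle $\gamma_n$ centered at $z'$ avoiding all of them, obtains the lower bound $|(q_{k,m}q_{k+1,m})(z)|\geq C(\varepsilon/4mk^2)^{2m}$ on $\gamma_n$, sums the series there, and only then transfers the estimate to the full neighborhood by the maximum principle (after multiplying by the uniformly bounded $q_{n,m}$, cf.\ \eqref{normalization2}). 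Without this cleaning argument your second step does not produce a bound independent of $n$. Moreover, the ratios $A_{k,m}^*/A_{n,m}^*$ do \emph{not} decay geometrically: by (i) the consecutive ratios tend to $1$, and (ii) is used only to obtain monotonicity, whence $|A_{k,m}^*/A_{n,m}^*|\leq 1$ for $k\leq n$ and $|A_{k,m}^*/A_{n,m}^*|\leq(1+\varepsilon)^{k-n}$ for $k\geq n$. The geometric gain that makes the sums converge is supplied by $|z|^{\pm(k-n)}\leq\alpha^{|k-n|}$, $\alpha<1$, i.e.\ precisely by keeping $|z|$ at distance $\delta$ from the unit circle; if the ratios themselves decayed as you claim, the estimates would hold on $|z|=1$, which they do not.

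Your treatment of part 2 also does not work as described. The quantity $q_{n,m}f-p_{n,m}$ must be compared with the \emph{infinite} tail $f-r_{n,m}=A_{n,m}^*z^n\sum_{k\geq n}(\cdots)$; a finite telescoping from $n_0$ to $n$ only reaches $r_{n,m}-r_{n_0,m}$ and never involves $f$. The validity of the tail representation rests on the $\sigma_1$-convergence of $(r_{n,m})_n$ to $f$ on compact subsets of $D_m^*(f)$ (Proposition \ref{teo cacoq}), which must be invoked explicitly. Your alternative---Cauchy's formula for $(q_{n,m}f-p_{n,m})/z^{n+1-\lambda_n}$ over circles of radii $r'<1<r''$ with a deformation picking up residues---is not available: after normalization $f$ is only known to be meromorphic in $D_m^*(f)\subset D_m(f)$, and $R_m(f)$ may equal $1$, so there need be no circle of radius $r''>1$ on which $f$ is defined. (For part 1 the correct mechanism is different again: on $\gamma_n$ one has $|A_{n,m}^*z^n|\to\infty$, so the bound on $r_{n,m}-r_{m,m}$ yields one on $r_{n,m}$ itself, and the extension to all of $|z|\geq e^{\delta}$ uses that $p_{n,m}/z^n$ is holomorphic at $\infty$ because $\deg p_{n,m}\leq n-m^*$.)
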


Notice that no assumption is made on the convergence of  the polynomials $q_{n,m}$. The second lemma is much more subtle since it refers to bounds on neighborhoods of arcs contained in $\{z: |z| = R_m^*\}$. Here (as in Suetin's lemma), we assume that the denominators of the incomplete Pad\'e approximants converge.

\begin{Lemma}\label{lema 2}
Let $f$ be a formal power series. Assume that $\limsup\limits_n\left|A_{n,m}\right|^{1/n}=1$ and $\lim_{n} q_{n,m} = q$ where $q$ is a polynomial of degree $m$.
Suppose that $f$ is holomorphic at the point $z_0$, $|z_0|=1$. Then there is a $\delta=\delta(z_0)>0$ such that
\begin{equation}\label{eq 8}
\max\limits_{e^{-\delta}\leq |z|\leq e^{\delta}, \
|\arg (z)-\arg (z_0)|\leq \delta}\left|\frac{(q_{n,m}f-p_{n,m})(z)}{A_{n,m}^*z^n}\right|=\mathcal{O}(1),\ \ \ n\xrightarrow[]{}\infty
\end{equation}
where $\arg(z) $ denotes the argument of the complex number $z$.
\end{Lemma}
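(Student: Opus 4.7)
The plan is to prove Lemma \ref{lema 2} in two stages. First, derive a uniform $\mathcal{O}(1)$ bound for $g_n(z):=(q_{n,m}(z)f(z)-p_{n,m}(z))/(A_{n,m}^*z^n)$ on the two circular arcs $A_\pm:=\{|z|=e^{\pm\delta}\}\cap\{|\arg z-\arg z_0|\le\delta\}$ by combining Lemma \ref{lema 1} with the regularization. Second, propagate these bounds to the full annular box $B_\delta:=\{e^{-\delta}\le|z|\le e^{\delta},\,|\arg z-\arg z_0|\le\delta\}$ using the holomorphy of $g_n$ near $z_0$ and a suitable maximum-principle argument.

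For the first stage, the hypothesis that $f$ is holomorphic at $z_0$ lets us shrink $\delta$ so that the inner arc $A_-$ sits in a compact subset of $\{|z|\le e^{-\delta}\}\setminus\mathcal{P}(f)$; Lemma \ref{lema 1} part 2 then gives $\|g_n\|_{A_-}=\mathcal{O}(1)$. For the outer arc $A_+$, I split $g_n=q_{n,m}f/(A_{n,m}^*z^n)-p_{n,m}/(A_{n,m}^*z^n)$. The second summand is $\mathcal{O}(1)$ by Lemma \ref{lema 1} part 1, while the first is bounded by $C/(|A_{n,m}^*|e^{n\delta})$; since the regularization properties (i), (iii), (iv) together imply $(A_{n,m}^*)^{1/n}\to 1$ (so $|A_{n,m}^*|\ge(1-\varepsilon)^n$ for every $\varepsilon>0$ and all large $n$), this quantity decays exponentially. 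Consequently $\|g_n\|_{A_+\cup A_-}=\mathcal{O}(1)$.

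For the second stage, choose $\rho>0$ such that $f$ is holomorphic on $\overline{D(z_0,\rho)}$; then $g_n$ is holomorphic in $D(z_0,\rho)$. The estimates from the first stage in fact apply verbatim on the two-dimensional regions $D(z_0,\rho)\cap\{|z|\le e^{-\delta}\}$ and $D(z_0,\rho)\cap\{|z|\ge e^{\delta}\}$, so the only part of $D(z_0,\rho)$ where a bound is not yet established is the lens $S:=D(z_0,\rho)\cap\{e^{-\delta}<|z|<e^{\delta}\}$. The portion of $\partial S$ that lies on $|z|=e^{\pm\delta}$ already carries the bound $|g_n|=\mathcal{O}(1)$; on the remaining portion (short arcs of $|z-z_0|=\rho$ inside the strip), a Bernstein--Walsh estimate for the polynomial $p_{n,m}$ of degree $\le n-m^*$ --- combined with the bound $\max_{|w|=e^\delta}|p_{n,m}(w)|\le C|A_{n,m}^*|e^{n\delta}$ coming from Lemma \ref{lema 1} part 1 and the uniform boundedness of $q_{n,m}f$ on $\overline{D(z_0,\rho)}$ --- yields the crude estimate $|g_n|\le Ce^{2n\delta}$.

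The main obstacle is turning these two boundary bounds --- $\mathcal{O}(1)$ on the circular arcs and $Ce^{2n\delta}$ on the radial arcs of $\partial S$ --- into a uniform $\mathcal{O}(1)$ estimate on some sub-box $B_{\delta'}\subset B_\delta$. I would apply Carleman's two-constants theorem in $S$, which gives $|g_n(z)|\le C\,(Ce^{2n\delta})^{\omega(z)}$, where $\omega(z)$ is the harmonic measure at $z$ of the radial arcs of $\partial S$ with respect to $S$. For any fixed $\delta$ this bound is still exponential, so the decisive step is a bootstrap exploiting the hypothesis $q_{n,m}\to q$: one uses the improved estimate on a slightly smaller sub-lens to sharpen the Bernstein--Walsh bound for $p_{n,m}$ on the radial arcs, iteratively contracting the exponent $2n\delta$ to $\mathcal{O}(1)$. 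This iterative scheme, parallel to the one used by S.P.\ Suetin in \cite{suetin1985} for ordinary Pad\'e approximants, is the delicate technical core of the argument and finally produces the bound \eqref{eq 8}.
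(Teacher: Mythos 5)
First, a point of reference: the paper does not prove this lemma at all; it states that the proof ``may be carried out following step by step that of \cite[Lemma 2]{suetin1985}'' and skips it, so your proposal can only be measured against Suetin's argument and against its own internal consistency. Your first stage is correct, but it is essentially a restatement of what Lemma \ref{lema 1} already gives: on $D(z_0,\rho)\cap\{|z|\geq e^{\delta}\}$ one bounds $p_{n,m}/(A_{n,m}^*z^n)$ by part 1 and absorbs $q_{n,m}f/(A_{n,m}^*z^n)$ using $|A_{n,m}^*|^{1/n}\to 1$ (which follows from property (i) alone), and on $D(z_0,\rho)\cap\{|z|\leq e^{-\delta}\}$ one uses part 2. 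This is not where the difficulty of the lemma lies.

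The gap is in the second stage, which is the entire content of the lemma. A single application of the two-constants theorem with $M_1=\mathcal{O}(1)$ on the circular arcs and $M_2=Ce^{2n\delta}$ on the radial arcs yields $|g_n(z)|\leq C\,e^{2n\delta\,\omega(z)}$, which still grows exponentially in $n$ at every interior point where $\omega(z)>0$, and the proposed iteration cannot repair this: if the nested lenses are to retain a fixed sub-box $B_{\delta'}$ in their intersection, the relative shrinkages $\epsilon_j$ must be summable, which forces $1-\omega_j=\mathcal{O}(\epsilon_j)$ to be summable as well, hence $\prod_j\omega_j>0$ and the contracted exponent $2n\delta\prod_j\omega_j$ stays bounded below by a positive multiple of $n$. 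The alternative route you invoke, ``sharpening the Bernstein--Walsh bound,'' goes the wrong way: Bernstein--Walsh extrapolation from a compact set to a larger one multiplies the bound by $e^{ng_K}$ with $g_K>0$ on the target set, so an improved interior estimate cannot be converted this way into an $\mathcal{O}(1)$ bound on the radial arcs. Symptomatically, the hypothesis $\lim_n q_{n,m}=q$ with $\deg q=m$ --- which the paper explicitly singles out as what distinguishes Lemma \ref{lema 2} from Lemma \ref{lema 1} --- never enters your argument in any operative way. In Suetin's proof it is precisely this hypothesis that controls the location of the zeros of $q_{n,m}q_{n+1,m}$ near the unit circle, so that the telescopic representation \eqref{eq 1} of $q_{n,m}f-p_{n,m}$ can be estimated on contours crossing the critical annulus after deleting exceptional neighborhoods of those zeros of total size $\mathcal{O}(\varepsilon)$ (the same device used in the proof of Lemma \ref{lema 1}, but now on arcs meeting $\{|z|=1\}$, where the summability must be extracted from properties (i)--(ii) of the regularization rather than from $|z|<1$). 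That mechanism, not a harmonic-measure bootstrap, is what closes the argument, and it is missing from your proposal.
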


The proof of Lemma \ref{lema 2} may be carried out following step by step that of \cite[Lemma 2]{suetin1985} so we skip it. The statement of Lemma \ref{lema 1} is similar to that of \cite[Lemma 1]{suetin1985} which was stated without proof in \cite{suetin1985}. For completeness, we include a proof of it.
\medskip

\begin{proof}
Let $r_{n,m}=\frac{p_{n,m}}{q_{n,m}}, n=1,2,...,$ where the polynomials $p_{n,m}$ and $q_{n,m}$ do not have common zeros. Let $\mathcal{P}_{n,m}(f)=\left\{\zeta_{n,1},...,\zeta_{n,\ell_n}\right\}$ denote the set of zeros of $q_{n,m}$.\

Consider the difference
\[
(r_{n,m}-r_{m,m})(z)=\sum\limits_{k=m}\limits^{n-1}\frac{A_{k,m}z^{k+1-\lambda_k-\lambda_{k+1}}q_{k,m-m^*}^*(z)}{(q_{k,m}q_{k+1,m})(z)} =
\]
\[
A_{n,m}^*z^n\sum\limits_{k=m}\limits^{n-1}\frac{A_{k,m}}{A_{n,m}^*}z^{k-n}\frac{z^{1-\lambda_k-\lambda_{k+1}}q_{k,m-m^*}^*(z)}{(q_{k,m}q_{k+1,m})(z)}.
\]
Therefore
\[
|(r_{n,m}-r_{m,m})(z)|\leq
\left| A_{n,m}^*z^n\right| \sum\limits_{k=m}\limits^{n-1}\left| \frac{A_{k,m}}{A_{n,m}^*}\right| |z|^{k-n}\frac{|z|^{1-\lambda_k-\lambda_{k+1}}\left| q_{k,m-m^*}^*(z)\right|}{\left| (q_{k,m}q_{k+1,m})(z)\right|},
\]
and using (iii) we have
\[
|(r_{n,m}-r_{m,m})(z)|\leq \left| A_{n,m}^*z^n\right| \sum\limits_{k=m}\limits^{n-1}\left| \frac{A_{k,m}^*}{A_{n,m}^*}\right| |z|^{k-n}\frac{|z|^{1-\lambda_k-\lambda_{k+1}}\left| q_{k,m-m^*}^*(z)\right|}{\left| (q_{k,m}q_{k+1,m})(z)\right|}.
\]

Property (ii)  implies that
\[
\left| A_{n-1,m}^*A_{n+1,m}^*\right|\leq (A_{n,m}^*)^2,
\]
or, what is the same, $\left|  {A_{n-1,m}^*}/{A_{n,m}^*}\right|\leq \left| {A_{n,m}^*}/{A_{n+1,m}^*}\right|.$
Therefore, the sequence $\left( \frac{A_{k,m}^*}{A_{k+1,m}^*}\right)$ monotonically increases to $1$ due to (i). Consequently,
\[
\left| \frac{A_{k,m}^*}{A_{n,m}^*}\right|=\left| \frac{A_{k,m}^*}{A_{k+1,m}^*} \right| \left| \frac{A_{k+1,m}^*}{A_{k+2,m}^*} \right|\cdots \left| \frac{A_{n-1,m}^*}{A_{n,m}^*} \right|\leq 1,
\]
and
\[
|(r_{n,m}-r_{m,m})(z)|\leq \left| A_{n,m}^*z^n\right| \sum\limits_{k=m}\limits^{n-1} |z|^{k-n}\frac{|z|^{1-\lambda_k-\lambda_{k+1}}\left| q_{k,m-m^*}^*(z)\right|}{\left| (q_{k,m}q_{k+1,m})(z)\right|}.
\]

Fix a compact set $\mathcal{K}\subset\lbrace z:\ \left|z\right|>1\rbrace$ and let $z'\in\mathcal{K}$. Set $U_{2r}(z')=\lbrace z: |z-z'|<2r\rbrace$. Take $r$ sufficiently small so that $|z|>1$ for all $z\in \overline{U_{2r}(z')}$. Then $|z|\geq\frac{1}{\alpha}$, $0<\alpha<1$ for all $z\in U_{2r}(z')$. Therefore  (in the sequel $C,C_1,C_2, \ldots$ denote constants which  only depend on $\mathcal{K}$),
\begin{equation}\label{eq 3}
|(r_{n,m}-r_{m,m})(z)|\leq C_1|A_{n,m}^*z^n|\sum\limits_{k=1}\limits^{n-m}\alpha^k\frac{|q_{n-k,m-m^*}^*(z)|}{|(q_{n-k,m}q_{n-k+1,m})(z)|}.
\end{equation}
Since $q_{k,m-m^*}^*(z)$ is normalized as in \eqref{normalizacion} we have
\[
\left\|q_{k,m-m^*}^*\right\|_\mathcal{K}=\max\limits_{z\in\mathcal{K}}|q_{k,m-m^*}^*(z)|\leq C<+\infty.
\]
Obviously, $\deg(q_{n-k,m}q_{n-k+1,m})\leq 2m$, $k=1,2,...,n-m$. Take $\varepsilon>0$  so that\\
\[
\varepsilon\sum\limits_{k=1}\limits^{\infty}\frac{1}{k^2}=\frac{2r}{3}<r.
\]
For each $k=1,2,...,n-m$ let $V_{k,\varepsilon}$ be the set consisting of the $(\varepsilon/(4mk^2))-\text{neighborhood}$ of the zeros of the polynomial $(q_{n-k,m}q_{n-k+1,m})$ and let $V_{n}^{\varepsilon}=\bigcup\limits_{k=1}\limits^{n-m}V_{k,\varepsilon}$. The sum of the diameters of the disks in $V_{n}^{\varepsilon}$ does not exceed $\varepsilon\sum\limits_{k=1}\limits^{\infty}\frac{1}{k^2}<r$. Therefore, there is a circle $\gamma_n$ centered at $z'$ of radius $r_n$, $r<r_n<2r$ which does not intersect $V_{n}^{\varepsilon}$. Then, for all $z\in \gamma_n$ and $k=1,2,...,n-1$
\[
|(q_{n-k,m}q_{n-k+1,m})(z)|\geq C_2(\varepsilon/4mk^2)^{2m}.
\]
From \eqref{eq 3}, we obtain
\begin{equation}\label{eq 4}
|(r_{n,m}-r_{m,m})(z)|\leq C_3|A_{n,m}^*z^n|(4m/\varepsilon)^{2m}\sum\limits_{k=1}\limits^{n-1}\alpha^kk^{4m}\leq C_4|A_{n,m}^*z^n|,
\end{equation}
since the series $\sum\limits_{k=1}\limits^{\infty}\alpha^kk^{4m}$ converges because $0<\alpha<1$. Now $|A_{n,m}^*z^n|\rightrightarrows\infty$ as $n\longrightarrow\infty$ in $U_{2r}(z')$; therefore, \eqref{eq 4} implies the inequality
\begin{equation}\label{eq 5}
|r_{n,m}(z)|\leq C_5|A_{n,m}^*z^n|,\qquad z\in\gamma_n,\qquad n\in \Lambda.
\end{equation}
Multiplying both sides of \eqref{eq 5} by $q_{n,m}$, using \eqref{normalization2} and the maximum   principle, we get
\begin{equation}\label{eq 6}
\left|\frac{p_{n,m}(z)}{A_{n,m}^*z^n}\right|\leq C_6,\qquad z\in \overline{U_{2r}(z')},\qquad n\in \Lambda.
\end{equation}

By the Heine-Borel theorem it follows that \eqref{eq 6} is true for all $z\in \mathcal{K}$. Then,   \eqref{eq 01} follows immediately taking $\mathcal{K}=\lbrace z:\ |z|=e^\delta\rbrace$, $\delta>0$, using the maximum principle and the fact that $p_{n,m}/A_{n,m}^*z^n$ is holomorphic in $\lbrace z:\ |z|>1\rbrace\cup\lbrace\infty\rbrace$.\

Now, fix a compact set $\mathcal{K}$ contained in $\lbrace z:\ |z|<1\rbrace\setminus\mathcal{P}(f)$ and $z'\in\mathcal{K}$. Choose $r>0$ sufficiently small so that $\overline{U_{2r}(z')}\subset\lbrace z:\ \left|z\right|<1\rbrace\setminus\mathcal{P}(f)$. By the $\sigma_1-convergence$ of the sequence $(r_{n,m})_{n\geq m}$ to $f$ on compact subsets of $\left\{z:|z|<1\right\}$, the next representation holds for almost all circles centered at $z'$ contained in $U_{2r}(z')$
\[
(f-r_{n,m})(z)=A_{n,m}^*z^n\sum\limits_{k=n}\limits^{\infty}\frac{A_{k,m}}{A_{n,m}^*}z^{k-n}\frac{z^{1-\lambda_k-\lambda_{k+1}}q_{k,m-m^*}^*(z)}{(q_{k,m}q_{k+1,m})(z)}.
\]
Then, on any such circle
\[
|(f-r_{n,m})(z)|\leq |A_{n,m}^*z^n|\sum\limits_{k=n}\limits^{\infty}\left|\frac{A_{k,m}}{A_{n,m}^*}\right||z|^{k-n}\frac{|z|^{1-\lambda_k-\lambda_{k+1}}|q_{k,m-m^*}^*(z)|}{|(q_{k,m}q_{k+1,m})(z)|},
\]
and using (iii)
\[
|(f-r_{n,m})(z)|\leq |A_{n,m}^*z^n|\sum\limits_{k=n}\limits^{\infty}\left|\frac{A_{k,m}^*}{A_{n,m}^*}\right||z|^{k-n}\frac{|z|^{1-\lambda_k-\lambda_{k+1}}|q_{k,m-m^*}^*(z)|}{|(q_{k,m}q_{k+1,m})(z)|}.
\]

We know that
\[
\left| \frac{A_{k,m}^*}{A_{n,m}^*}\right|=\left| \frac{A_{k,m}^*}{A_{k-1,m}^*} \right| \left| \frac{A_{k-1,m}^*}{A_{k-2,m}^*} \right|\cdots \left| \frac{A_{n+1,m}^*}{A_{n,m}^*} \right|\leq \left| \frac{A_{n+1,m}^*}{A_{n,m}^*}\right|^{k-n}.
\]
On account of  (i), for any $\varepsilon>0$ there exists $n_0$ such that if $n\geq n_0$
\[
\left| \frac{A_{n+1,m}^*}{A_{n,m}^*} \right|<(1+\varepsilon).
\]
Take $\varepsilon >0$ sufficiently small so that
\[
\left| 1+\varepsilon \right|\left| z \right|\leq \alpha<1, \qquad z \in \overline{U_{2r}(z')}.
\]
Using \eqref{normalization2} for the $q_{k,m-m^*}^*$ it follows that
\begin{equation}\label{eq3.11}
|(f-r_{n,m})(z)|\leq C|A_{n,m}^*z^n|\sum\limits_{k=n}\limits^{\infty}\alpha^{k-n}\frac{|z|^{1-\lambda_k-\lambda_{k+1}}}{|(q_{k,m}q_{k+1,m})(z)|}.
\end{equation}
on almost any circle centered at $z'$ contained in $\overline{U_{2r}(z')}$.\

Now, define $\widehat{V}_{k,\varepsilon}$ as the set consisting of the $\left(\varepsilon/4m(k+1-n)^2\right)-$neighborhood of the zeros of the polynomial $q_{k,m}q_{k+1,m}$, $k\geq n$, and $\widehat{V}_n^\varepsilon=\bigcup\limits_{k=n}^\infty\widehat{V}_{k,\varepsilon}$. Take $\varepsilon>0$ so that
\begin{align*}
\varepsilon\sum\limits_{k=1}^\infty\frac{1}{k^2}\leq\frac{2r}{3}<r.
\end{align*}
The sum of the diameters of the disks constituting $\widehat{V}_n^\varepsilon$ does not exceed $\varepsilon\sum\limits_{k=1}^\infty\frac{1}{k^2}<r$. Therefore, there is a circle $\gamma_n$, $0\notin\gamma_n$, centered at $z'$ of radius $r_n$, $r<r_n<2r$, which does not intersect $\widehat{V}_n^\varepsilon$. Then, for all $z\in\gamma_n$ and $k\geq n$
\begin{align*}
|(q_{k,m}q_{k+1,m})(z)|\geq C_1\left(\frac{\varepsilon}{4m(k+1-n)^2}\right)^{2m}
\end{align*}
and using \eqref{eq3.11}, we obtain
\begin{equation}\label{eq 7}
|(f-r_{n,m})(z)|\leq C_1|A_{n,m}^*z^n|\sum\limits_{k=n}\limits^{\infty}\alpha^{k-n}(k+1-n)^{4m}\leq C_3\left|A_{n,m}^*z^n\right|
\end{equation}
since $\sum\limits_{k=0}^\infty\alpha^k(k+1)^{4m}<+\infty$.\

From \eqref{eq 7} it follows that
\[
\left|\frac{(q_{n,m}f-p_{n,m})(z)}{A_{n,m}^*z^n}\right|\leq C_4,\qquad z \in \gamma_n,
\]
and from the maximum principle, we obtain
\[
\left|\frac{(q_{n,m}f-p_{n,m})(z)}{A_{n,m}^*z^n}\right|\leq C_5,\qquad z \in U_r(z').
\]
Using the Heine-Borel theorem it follows that
\[
\left|\frac{(q_{n,m}f-p_{n,m})(z)}{A_{n,m}^*z^n}\right|\leq C_6,\ \ z \in\mathcal{K}.
\]
Now $\mathcal{K}\subset\lbrace z:\ |z|<1\rbrace\setminus\mathcal{P}(f)$; therefore, \eqref{eq 02} follows immediately and we are done.
\end{proof}

\section{Main results}\label{section2.4}
In the sequel $\dist(\zeta,B_n)$ denotes the distance from a point $\zeta$ to the set $B_n$. Let $\mathcal{P}_{n,m}(f)=\lbrace\zeta_{n,1},\cdots,\zeta_{n,\ell_n}\rbrace$ be the set of zeros of $q_{n,m}$ enumerated so that
\[
|\zeta_{n,1}-\zeta|\leq\cdots\leq|\zeta_{n,\ell_n}-\zeta|.
\]
Similar to the way it was done in the introduction, one can define the characteristic values $\lambda(\zeta)$ and $\mu(\zeta)$.

\begin{Theorem}\label{main}
Let $f$ be a formal power series. Fix $m\geq m^*\geq 1$. Assume that $0<R_m^*(f)<+\infty$. Suppose that
\[
\lim_{n\rightarrow\infty} \dist(\zeta,\mathcal{P}_{n,m}(f))=0.
\]
Let $\mathcal{Z}_n(f)$ be the set of zeros of $q_{n,m-m^*}^*$. If $\vert\zeta\vert>R^*_m(f)$, then
\begin{equation}\label{eq 52.1}
\lim_{n\in\Lambda}\dist(\zeta,\mathcal{Z}_n(f))=0
\end{equation}
where $\Lambda$ is any infinite sequence of indices verifying ${\rm (iv)}$ in the regularization  of $(A_{n,m})_{n\geq m}$. If $\vert\zeta\vert<R^*_m(f)$, then either \eqref{eq 52.1} takes place or $\zeta$ is a pole of $f$ of order $\tau = \lambda(\zeta) = \mu(\zeta)$. If
\[
\lim_{n\rightarrow\infty}q_{n,m}=q_m,\ \ \deg q_m=m,\ \ q_m(0)\neq 0\ \ \text{and}\ \ \vert\zeta\vert=R^*_m(f),
\]
then we have either \eqref{eq 52.1} or $\zeta$ is a singular point of $f$.
\end{Theorem}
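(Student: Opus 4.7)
The plan is to prove all three cases by a contradiction argument built on one common identity, supplying case-dependent bounds via Lemmas~\ref{lema 1} and \ref{lema 2}. After the standard change of variables making $R_m^*(f)=1$, multiplying \eqref{eq 1} by $q_{n,m}q_{n+1,m}$ gives the workhorse relation
\[
(p_{n+1,m}q_{n,m}-p_{n,m}q_{n+1,m})(z)=A_{n,m}\,z^{\,n+1-\lambda_n-\lambda_{n+1}}\,q_{n,m-m^*}^{*}(z). \qquad (\star)
\]
Suppose, toward a contradiction, that \eqref{eq 52.1} fails along some infinite $\Lambda'\subset\Lambda$; then there is $\rho>0$ such that $q_{n,m-m^*}^{*}$ has no zero in $\overline{U_\rho(\zeta)}$ for $n\in\Lambda'$. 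By hypothesis there is $\zeta_n\in\mathcal{P}_{n,m}(f)$ with $\zeta_n\to\zeta$, and since the polynomials $q_{n,m}$ are uniformly bounded on compact sets by \eqref{normalization2} and have bounded degree, Cauchy estimates bound their derivatives on compacts, so a mean-value argument yields $|q_{n,m}(\zeta)|=O(|\zeta-\zeta_n|)\to 0$, and similarly $q_{n+1,m}(\zeta)\to 0$.

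The core step in each case is to force $|q_{n,m-m^*}^{*}(\zeta)|\to 0$ along $\Lambda$. The lower bound on the right hand side of $(\star)$ evaluated at $z=\zeta$ is uniform across the three cases: property (iv) of the regularization gives $|A_{n,m}|\ge c|A_{n,m}^{*}|$ for $n\in\Lambda$, and the factor $|\zeta|^{1-\lambda_n-\lambda_{n+1}}$ is bounded below by a positive constant (since $|\zeta|$ is fixed and $0\le\lambda_n,\lambda_{n+1}\le m$), so
\[
|{\rm RHS}(\zeta)|\ge c'\,|A_{n,m}^{*}\zeta^n|\,|q_{n,m-m^*}^{*}(\zeta)|,\qquad n\in\Lambda.
\]
Once $|q_{n,m-m^*}^{*}(\zeta)|\to 0$ is established, \eqref{normalization2} together with Montel's theorem extracts a subsequence along which the degree-$\le(m-m^*)$ polynomials $q_{n,m-m^*}^{*}$ converge locally uniformly to a polynomial $q^{*}$ with $q^{*}(\zeta)=0$, and continuity of polynomial roots then produces zeros of $q_{n,m-m^*}^{*}$ approaching $\zeta$, contradicting the standing assumption.

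The three cases differ only in how $|{\rm LHS}(\zeta)|$ is shown to be $o(|A_{n,m}^{*}\zeta^n|)$. In Case~1 ($|\zeta|>1$), Lemma~\ref{lema 1}(1) gives $|p_{n,m}(\zeta)|\le C|A_{n,m}^{*}\zeta^n|$ and $|p_{n+1,m}(\zeta)|\le C|A_{n+1,m}^{*}\zeta^{n+1}|$; combined with $q_{n,m}(\zeta),q_{n+1,m}(\zeta)\to 0$ and (i), the LHS is $o(|A_{n,m}^{*}\zeta^n|)$. In Case~2 ($|\zeta|<1$ with $\zeta$ assumed not a pole of $f$), Lemma~\ref{lema 1}(2) furnishes the decomposition $p_{n,m}(z)=q_{n,m}(z)f(z)-g_{n,m}(z)$ with $|g_{n,m}(z)|\le C|A_{n,m}^{*}z^n|$ in a disk around $\zeta$; substituting into the LHS of $(\star)$ cancels the $f$-terms and leaves $g_{n,m}q_{n+1,m}-g_{n+1,m}q_{n,m}$, again $o(|A_{n,m}^{*}\zeta^n|)$ at $\zeta$. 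Case~3 proceeds exactly like Case~2 but uses Lemma~\ref{lema 2}, whose applicability rests on the hypothesis $q_{n,m}\to q_m$, to supply the same local bound on $p_{n,m}-q_{n,m}f$ across a lens meeting both sides of the unit circle.

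The most delicate point, and the one I expect to be the main obstacle, is the refinement in Case~2 that the pole order $\tau$ equals both $\lambda(\zeta)$ and $\mu(\zeta)$. Here $\mu(\zeta)\le\lambda(\zeta)$ is tautological and $\mu(\zeta)\ge\tau$ is the scalar instance of Theorem~\ref{polesystem}, so what really needs work is the upper bound $\lambda(\zeta)\le\tau$: one must rule out additional zeros of $q_{n,m}$ crawling to $\zeta$ sub-geometrically beyond the $\tau$ geometric ones. The natural attack is to factor out the $\lambda(\zeta)$ zeros of $q_{n,m}$ clustering at $\zeta$ as a polynomial $\widetilde{Q}_n$, note that $\widetilde{Q}_n/q_{n,m}$ is holomorphic and uniformly bounded on a fixed neighborhood of $\zeta$, and then extract the vanishing order by matching the local Laurent data of $f$ at its pole against the Cauchy-type integrals underlying Lemma~\ref{lema 1}, thereby forcing $\lambda(\zeta)=\tau$ for large $n$.
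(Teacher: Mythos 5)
Your route to \eqref{eq 52.1} in the three regimes is essentially the paper's: the same identity obtained by clearing denominators in \eqref{eq 1}, bounded above via Lemma \ref{lema 1} (parts 1 and 2) or Lemma \ref{lema 2} and below via property (iv) of the regularization, with only the cosmetic difference that you evaluate at $\zeta$ while the paper evaluates at the nearby zero $\zeta_n$ of $q_{n,m}$ (and, as in the paper, one should note that the normalization \eqref{normalizacion} keeps any limit of the $q^*_{n,m-m^*}$ from being identically zero before invoking Hurwitz). That part is sound.

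The genuine gap is in the pole case. You correctly reduce $\tau=\lambda(\zeta)=\mu(\zeta)$ to the inequality $\lambda(\zeta)\le\tau$, but the attack you sketch --- factoring out $\widetilde{Q}_n$ and ``matching Laurent data against the Cauchy-type integrals'' --- is not carried out, and, more importantly, it makes no use of the hypothesis that \eqref{eq 52.1} fails. That hypothesis is not decorative: for truly incomplete Pad\'e approximants ($m^*<m$) the denominators carry up to $m-m^*$ spurious zeros which may well accumulate at the pole $\zeta$, so $\lambda(\zeta)>\tau$ is entirely possible; the content of the theorem is precisely that when this happens the zeros of $q^*_{n,m-m^*}$ must also accumulate at $\zeta$. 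Hence an unconditional proof of $\lambda(\zeta)\le\tau$ cannot exist, and any correct argument must feed the failure of \eqref{eq 52.1} back in. The paper's argument does exactly that: assuming $\tau<\lambda(\zeta)$, the $\sigma_1$-convergence of $r_{n,m}$ to $f$ plus normality of the $q_{n,m}$ forces the numerators $p_{n,m}$ to have zeros $\eta_n\to\zeta$ and to converge along a subsequence to a function vanishing at $\zeta$; evaluating the identity at $\eta_n$ (where $p_{n,m}(\eta_n)=0$) and using the bound \eqref{eq 52.2} for $F=(z-\zeta)^\tau f$ gives $|q^*_{n,m-m^*}(\eta_n)|\le C\,|p_{n+1,m}(\eta_n)|\to 0$, contradicting the assumed failure of \eqref{eq 52.1}, which keeps $|q^*_{n,m-m^*}(\eta_n)|$ bounded away from zero along a subsequence of $\Lambda$. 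Your sketch contains neither the passage to the zeros of $p_{n,m}$ nor the use of the failure of \eqref{eq 52.1}, so this step does not go through as written. A secondary imprecision: $\mu(\zeta)\ge\tau$ here is not the scalar instance of Theorem \ref{polesystem} (that instance is ordinary Pad\'e approximation, $m^*=m$); what is needed is the direct result for incomplete Pad\'e approximants, \cite[Theorem 3.5]{cacoq1}, which is what the paper invokes.
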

\begin{proof}
Without loss of generality, we can assume that $R^*_m(f)=1$. The general case reduces to it with the change of variables $z\rightarrow z/R^*_m(f)$. Assume that $|\zeta|\neq 1$ and $\zeta$ is a regular point of $f$ should $|\zeta|<1$. Choose $\delta>0$ such that $|\zeta|>e^{\delta}$ or $|\zeta|<e^{-\delta}$ depending on whether $|\zeta|>1$ or $|\zeta|<1$, respectively. Let $q_{n,m}(\zeta_n)=0$, $\lim_{n\rightarrow\infty}\zeta_n=\zeta$.\

Evaluating at $\zeta_n$, using \eqref{eq 01}, if $|\zeta|>1$ or \eqref{eq 02}, when $|\zeta|<1$, and taking ${\rm (iv)}$ into account, it follows that
\[
\left|p_{n,m}(\zeta_n)/A_{n,m}^*\zeta_n^n\right|\leq C_1,\qquad n\geq 0,\qquad n\in\Lambda,
\]
where $C_1$ is some constant and $\Lambda$ is the sequence of indices which appears in the regularization of $(A_{n,m})_{n\geq m}$. (In the sequel $C_1,C_2,\cdots$ denote constants which do not depend on $n$.) However, from \eqref{eq 1} it follows that
\[
p_{n,m}(\zeta_n)/A_{n,m}^*\zeta_n^n=-\zeta_n^{1-\lambda_n-\lambda_{n+1}}q_{n,m-m^*}^*(\zeta_n)/q_{n+1,m}(\zeta_n),
\]
which combined with the previous inequality gives
\[
|q_{n,m-m^*}^*(\zeta_n)|\leq C_2|q_{n+1,m}(\zeta_n)|,\ \ \ \ n\geq n_0,\ \ \ \ n\in\Lambda.
\]
Therefore, \eqref{eq 52.1} takes place.

Now, assume that $|\zeta|<1$ and $\limsup_{n\in\Lambda}\dist(\zeta,\mathcal{Z}_n(f))>0$. Then, $\zeta$ is a singular point of $f$. Since $D_m^*(f)\subset D_m(f)$ according to Proposition \ref{teo cacoq}, $\zeta$ must be a pole of $f$. Let $\tau$ be the order of the pole of $f$ at $\zeta$. Let $\omega(z)=(z-\zeta)^\tau$ and $F=\omega f$. Notice that $F(\zeta)\neq 0$. Using \eqref{eq 02} and ${\rm (iv)}$, it follows that there exists a closed disk $U_r$ centered at $\zeta$ of radius $r$ sufficiently small so that
\begin{equation}\label{eq 52.2}
\max\limits_{U_r}\left|\frac{(q_{n,m}F-p_{n,m}\omega)(z)}{A_{n,m}^*z^n}\right|\leq C_3,\ \ \ n\geq n_0,\ \ \ n\in\Lambda.
\end{equation}

Suppose that $\tau<\lambda(\zeta)$. Since $\sigma_1-\lim_{n\rightarrow\infty}r_{n,m}=f$ in $D_m^*(f)$ (see Proposition \ref{teo cacoq}), it follows that for each $n\in\mathbb{Z}_+$ there exists a zero $\eta_n$ of $p_{n,m}$ such that $\lim_{n\rightarrow\infty}\eta_n=\zeta$. Take $r>0$ sufficiently small so that $\min_{U_r}|F(z)|>0$. Substituting $\eta_n$ in \eqref{eq 52.2}, we have
\[
|q_{n,m}(\eta_n)/A_{n,m}^*\eta_n^n|\leq C_4,\ \ \ \ n\geq n_0,\ \ \ \ n\in\Lambda,
\]
and taking into account that \eqref{eq 1} leads to
\[
|q_{n,m}(\eta_n)/A_{n,m}^*\eta_n^n|=-\eta_n^{1-\lambda_n-\lambda_{n+1}}q_{n,m-m^*}^*(\eta_n)/p_{n+1,m}(\eta_n),
\]
we obtain
\[
|q_{n,m-m^*}^*(\eta_n)|\leq C_5|p_{n+1,m}(\eta_n)|,\ \ \ \ n\geq n_0,\ \ \ \ n\in\Lambda.
\]
Since $\limsup_{n\in\Lambda}\dist(\zeta,\mathcal{Z}_n(f))>0$, it follows that
\begin{equation}\label{eq 52.3}
\lim_{n\in\Lambda'}|p_{n+1,m}(\eta_n)|>0,
\end{equation}
for some subsequence $\Lambda'\subset\Lambda$.

The normalization \eqref{normalizacion} imposed on $(q_{n,m}), n \geq m,$ makes this sequence uniformly bounded on compact sets of $\mathbb{C}$. So, any sequence $(q_{n,m})_{n\in I}, I\subset\mathbb{Z}_+$, contains a uniformly convergent subsequence. This, combined with $\sigma_1-\lim_{n\rightarrow}r_{n,m}=f$ in $D_m^*(f)$, and the assumption that $\tau<\lambda(\zeta)$ imply that there exists a sequence of indices $\Lambda''\subset\Lambda'$ such that $\lim_{n\in\Lambda''}p_{n+1,m}=F_1$ uniformly on a closed neighborhood of $\zeta$, where $F_1$ is analytic at $\zeta$ and $F_1(\zeta)=0$ (see \cite[Lemma 1]{gonchar1975} where it is shown that under adequate assumptions uniform convergence on compact subsets of a region can be derived from $\sigma_1$ convergence). This contradicts \eqref{eq 52.3}; thus, $\tau\geq\lambda(\zeta)$. Now, $\mu(\zeta) \geq \tau$ according to \cite[Theorem 3.5]{cacoq1} and, trivially $\lambda(\zeta) \geq \mu(\zeta)$. Putting these inequalities together it follows that $\tau = \lambda(\zeta) = \mu(\zeta)$ as claimed.

If $|\zeta|=1$ and $\zeta$ is a regular point, the proof of \eqref{eq 52.1} is the same as for the case when $|\zeta|\neq 1$. In this case, use \eqref{eq 02} on a closed neighborhood of $\zeta$ in which $f$ is analytic.
\end{proof}

Since $\deg q_{n,m-m^*}^*\leq m-m^*$ for all $n\geq m$. Should $\lim_{n\rightarrow\Lambda}q_{n,m-m^*}^*=q_m^*$ then $\deg q_m^*\leq m-m^*$. This places some restriction on the number of zeros of $q_m$ which verify \eqref{eq 52.1}; that is, at most $m-m^*$ distinct zeros of $q_m$ can fulfill \eqref{eq 52.1}. In particular we have

\begin{Corollary} \label{cor2}
Suppose that $\lim_n q_{n,m} = q_m, \deg q_m = m, q_m(0) \neq 0$, all the zeros of $q_m$ are distinct and $R_m^*(f) < +\infty$. Then at least $m^*$ of the zeros of $q_m$ are singular points of $f$ and lie in the closure of $D_m^*(f)$, those lying in $D_m^*(f)$ are simple poles.
\end{Corollary}

\begin{proof} By Proposition \ref{rmayor0} we have $0 < R_0(f) \leq R_m^*(f)$.
We know that $\deg q_{n,m-m^*}^*\leq m-m^*$ for all $n\geq m$. In particular, this implies that for each $n\in\Lambda$ the set $\mathcal{Z}_n(f)$ has at most $m-m^*$ points. We can assume that $R_m^*(f) = 1$. Suppose that less that $m^*$ zeros of $q_m$ are singular points of $f$ in the closure of $D_m^*(f)$. This means that at least $m-m^*+1$ of them are either regular points of $f$ in the closure of $D_m^*(f)$ or have absolute value greater than $R_m^*(f)$. According to Theorem \ref{main} there exists a subsequence of indices $\Lambda^{\prime} \subset \Lambda$ such that
\[\lim_{n\rightarrow\Lambda'} q_{n,m-m^*}^*= C q_m^*, \qquad \deg q_m^* \geq m-m^* +1,\]
where $C$ is a constant different from zero. This is clearly impossible. On the other hand, according to Proposition  \ref{rmayor0} those zeros lying in $D_m^*(f)$ are simple poles as we claimed.
\end{proof}

Now, suppose we know that
\begin{equation}\label{a}
\lim_{n\rightarrow\infty}q_{n,m-m^*}^*=q_m^*
\end{equation}
and let $\mathcal{Z}(f)$ be the set of zeros of $q^*_m$. Let $\mathcal{P}(f)$ denote the set of zeros of $q_m$.

\begin{Corollary} \label{prob}
Suppose that $\lim_n q_{n,m} = q_m, \deg q_m = m, q_m(0) \neq 0$ and \eqref{a} take place. Then all the points in $\mathcal{P}(f)\setminus\mathcal{Z}(f)$ are singular points of $f$.
\end{Corollary}

This corollary is a direct consequence of Theorem \ref{main}. Notice that when $m=m^*$ then $q_{n,m-m^*}^*\equiv 1$; consequently, $\mathcal{Z}(f)=\emptyset$ and the corollary reduces to Suetin's theorem.

A point lying in $\mathcal{Z}(f)$ in principle may also be a singular point of $f$. In order to improve this corollary it would be convenient to establish a closer connection between the zeros of $q_m$ and the accumulation points of the zeros of $q_{n,m-m^*}$, at least under  assumption \eqref{a}. Numerical evidence suggests that the following statements hold true.
\begin{itemize}
\item[C3)] Under the assumptions of Corollary \ref{prob}, let $\zeta$ be a zero of $q_m$ of multiplicity $\tau$. Assume that either $|\zeta|>R_m^*$ or $|\zeta|\leq R_m^*$ and it is a regular point of $f$; then, $\zeta$ is a zero of $q_m^*$ of multiplicity $\geq\tau$. Additionally, if $|\zeta| < R_m^*$ and it is a pole of $f$ of order $\tau^*$ then it must be a zero of $q_m^*$ of multiplicity $\geq \tau -\tau^*$.
\end{itemize}

The validity of these statements would allow to weaken the assumption regarding the simplicity of the zeros of $q_m$ in Corollary \ref{cor2} and the results of the next section.

\section{Applications to Hermite-Pad\'{e} approximation}

Let $\mathbf{f}=\left( f_1,f_2,\ldots,f_d\right)$ and $\mathbf{m}=(m_1,\ldots,m_d)$ be given. Consider the sequence $(\mathbf{R}_{n,\mathbf{m}})$, $n\geq\max\lbrace m_1,\ldots,m_d\rbrace$, of Hermite-Pad\'{e} approximants. In the rest of this section we assume that the sequence of commom denominators $(q_{n,\mathbf{m}})$, $n\geq\max\lbrace m_1,\ldots,m_d\rbrace$ verifies \eqref{denominators}.

\begin{Theorem}\label{DD}
Let $\mathbf{f}=\left(f_1,f_2,\ldots,f_d\right)$ and $\mathbf{m}=(m_1,\ldots,m_d)$ be given. Assume that \eqref{denominators} takes place and all the zeros of $q_{|{\bf m}|}$ are simple. Fix an integer $m^*, 1 \leq m^* \leq \max\{m_k: k=1,\ldots,d\}$. Assume that for all $n$ sufficiently large $q_{n,{\bf m}}$ is unique and $\deg(q_{n,{\bf m}}) = |{\bf m}|$. Let
\begin{equation}\label{c}
F=\sum_{k=1}^d p_kf_k,\qquad \deg p_k \leq m_k-m^*,
\end{equation}
where the  $p_k$ denote arbitrary fixed polynomials (by convention $\deg p_k <0$ means that $p_k \equiv 0$). Then, the closure of $D_{m^* -1}(F)$ contains at least $m^*$ singular points of $F$ which are zeros of $q_{|{\bf m}|}$ and those lying in $D_{m^* -1}(F)$ are  simple poles of $F$. In particular, all such zeros are system singularities of $\bf f$.
\end{Theorem}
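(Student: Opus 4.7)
The plan is to reduce the statement to the scalar incomplete Pad\'e setting and invoke Corollary \ref{cor2} for $F$.

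First I would construct, out of the Hermite-Pad\'e data, a sequence of incomplete Pad\'e approximants of $F$. Setting $P_n(z) := \sum_{k=1}^d p_k(z)\,p_{n,\mathbf{m},k}(z)$, the bounds $\deg p_k \leq m_k - m^*$ and $\deg p_{n,\mathbf{m},k} \leq n - m_k$ give $\deg P_n \leq n - m^*$, while multiplying a.2) of Definition \ref{def hp} by $p_k$ and summing yields
\[
q_{n,\mathbf{m}}(z)F(z) - P_n(z) = \sum_{k=1}^d p_k(z)\bigl(q_{n,\mathbf{m}} f_k - p_{n,\mathbf{m},k}\bigr)(z) = A\,z^{n+1} + \cdots .
\]
Together with $\deg q_{n,\mathbf{m}} = |\mathbf{m}|$, this exhibits $P_n/q_{n,\mathbf{m}}$ as an incomplete Pad\'e approximant of type $(n, |\mathbf{m}|, m^*)$ for $F$ in the sense of Definition \ref{def incomp}.

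Next I would apply Corollary \ref{cor2} to the canonical approximants $p_n/q_n$ obtained from $P_n/q_{n,\mathbf{m}}$ by cancelling common factors and renormalizing via \eqref{normalizacion}. The main obstacle is verifying that $\lim_n q_n = c\,q_{|\mathbf{m}|}$ with $c \neq 0$, i.e., that no zero of $q_{|\mathbf{m}|}$ disappears in the limit through cancellation against $P_n$. The zeros of $q_n$ are a subset of those of $q_{n,\mathbf{m}}$, and since $q_{|\mathbf{m}|}$ has $|\mathbf{m}|$ simple zeros, each attracts exactly one zero of $q_{n,\mathbf{m}}$. Using $p_{n,\mathbf{m},k} \to q_{|\mathbf{m}|}\,f_k$ locally on the disks of analyticity of the $f_k$ one has $P_n \to q_{|\mathbf{m}|}\,F$; combined with the simplicity of the zeros of $q_{|\mathbf{m}|}$ and the uniqueness of $q_{n,\mathbf{m}}$, this should rule out persistent cancellation and yield $\deg(\lim q_n) = |\mathbf{m}|$ with distinct roots. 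Corollary \ref{cor2} then produces at least $m^*$ zeros of $q_{|\mathbf{m}|}$ that are singular points of $F$ lying in $\overline{D^*_{|\mathbf{m}|}(F)}$, those interior to $D^*_{|\mathbf{m}|}(F)$ being simple poles.

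Finally I would transfer the conclusion from $\overline{D^*_{|\mathbf{m}|}(F)}$ to $\overline{D_{m^*-1}(F)}$ and establish the system singularity property. Let $k$ be the number of poles of $F$ in $D^*_{|\mathbf{m}|}(F)$. If $k \leq m^* - 1$, the inclusion $D_{m^*-1}(F) \subseteq D^*_{|\mathbf{m}|}(F)$ coming from Proposition \ref{teo cacoq} combined with the maximality of $D_{m^*-1}(F)$ forces $\overline{D^*_{|\mathbf{m}|}(F)} = \overline{D_{m^*-1}(F)}$; if $k \geq m^*$, the $m^*$ poles of smallest absolute value lie in $\overline{D_{m^*-1}(F)}$ and are simple zeros of $q_{|\mathbf{m}|}$ by Proposition \ref{rmayor0}. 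To check that each such $\zeta$ is a system singularity of $\mathbf{f}$, denote by $\zeta_1,\ldots,\zeta_j$ the other poles of $F$ inside $D_{|\zeta|}$ (so $j \leq m^* - 1$), set $Q(z) = \prod_{i=1}^j(z - \zeta_i)$, and observe that $QF = \sum_k (Q p_k) f_k$ is analytic on $D_{|\zeta|}$, singular at $\zeta$ (since $Q(\zeta) \neq 0$), and satisfies $\deg(Q p_k) \leq j + (m_k - m^*) < m_k$, fulfilling Definition \ref{sing sistema}.
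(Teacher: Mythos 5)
Your reduction to the scalar incomplete Pad\'e setting via $P_n=\sum_{k=1}^d p_k\,p_{n,\mathbf{m},k}$ is exactly the paper's opening move, and your closing steps (transferring from $\overline{D^*_{|\mathbf{m}|}(F)}$ to $\overline{D_{m^*-1}(F)}$, and multiplying by $Q$ to meet Definition \ref{sing sistema} --- a point the paper itself treats rather casually) are sound. The genuine gap is that Corollary \ref{cor2} carries the hypothesis $R^*_{|\mathbf{m}|}(F)<+\infty$, and you never verify it. This is not automatic: the combination $F=\sum_k p_kf_k$ can suffer massive cancellation and continue meromorphically with fewer than $m^*$ poles far beyond what $D_{m^*}(F)$ suggests, or even be entire; then the coefficients $A_{n,\mathbf{m}}$ in the linearized relation $q_{n,\mathbf{m}}F-P_n=A_{n,\mathbf{m}}z^{n+1}+\cdots$ may decay super-geometrically, $R^*_{|\mathbf{m}|}(F)=\infty$, and neither Corollary \ref{cor2} nor Theorem \ref{main} applies. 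This is precisely where the theorem's hypothesis that $q_{n,\mathbf{m}}$ is unique with $\deg q_{n,\mathbf{m}}=|\mathbf{m}|$ for all large $n$ enters: the paper first disposes of the easy cases ($R^*_{|\mathbf{m}|}(F)>R_{m^*}(F)$, or $D_{m^*}(F)$ already containing $m^*$ poles) by pole counting, and in the remaining case multiplies by the polynomial $w$ killing the poles of $F$ in $D_{m^*-1}(F)$ and invokes Lemmas 3.2 and 2.5 of \cite{cacoq2} together with that uniqueness/degree hypothesis to show $wF$ is not a polynomial and $R_0(wF)<\infty$, whence $R^*_{|\mathbf{m}|}(F)=R_{m^*-1}(F)<\infty$. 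Your proposal never touches that hypothesis, which is a reliable sign the argument is incomplete.

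A second, smaller weakness: your justification that the reduced denominators of $P_n/q_{n,\mathbf{m}}$ converge to a limit of full degree $|\mathbf{m}|$ is only heuristic. The convergence $P_n\to q_{|\mathbf{m}|}F$ holds only where the approximants converge, and at a zero $\zeta$ of $q_{|\mathbf{m}|}$ where $F$ is analytic the limit equals $0$ in any case, so it cannot exclude that $P_n$ vanishes at the nearby zero $\zeta_n$ of $q_{n,\mathbf{m}}$ along a subsequence. The paper avoids having to make this clean by applying Theorem \ref{main} zero by zero and counting against $\deg q^*_{n,|\mathbf{m}|-m^*}\leq|\mathbf{m}|-m^*$, rather than citing Corollary \ref{cor2} wholesale; if you insist on routing the proof through Corollary \ref{cor2}, the no-cancellation claim must actually be proved, not asserted to ``rule out'' persistent cancellation.
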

\begin{proof}
In the first part of the proof it is not used that the zeros of $q_{|{\bf m}|}$ are simple.
Multiplying each relation a.2) in Definition \ref{def hp} by $p_k$ for $k=1,\ldots,d$ and adding them up it follows that
\begin{equation}
\label{eq d}
 q_{n,{\bf m}}(z)F(z) - P_{n,{\bf m}}(z) = A_{n,{\bf m}}z^{n+1} + \cdots ,
\end{equation}
where $P_{n,{\bf m}}(z) = \sum_{k=1}^d p_k p_{n,{\bf m}}$ is of degree $\leq n-m^*$. It follows that $P_{n,{\bf m}}/q_{n,{\bf m}}$ is an incomplete Hermite-Pad\'e approximation of type $(n,|{\bf m}|,m^*)$ with respect to $F$. From Proposition \ref{rmayor0} it follows that $0 < R_0(F) < \infty$ and due to Proposition \ref{teo cacoq}
\[ \sigma_1- \lim_{n\to \infty}\frac{P_{n,{\bf m}}}{q_{n,{\bf m}}} = F  \]
on compact subsets of $D_{|{\bf m}|}^*(F)$, where $D_{|{\bf m}|}^*(F)$ is the disk of radius $R_{|{\bf m}|}^*(F)$ given by \eqref{Rm*} relative to the function $F$ and the indices $|{\bf m}|, m^*$.

In $D_{|{\bf m}|}^*(F),$ $F$ contains only poles and according to \cite[Lemma 1]{gonchar1975} each pole of $F$ in $D_{|{\bf m}|}^*(F)\supset D_{m^*}(F)$ must be a zero of $q_{|{\bf m}|}$ (counting multiplicities). If $R_{|{\bf m}|}^*(F) > R_{m^*}(F)$ from the definition of $D_{m^*}(F)$ the closure of this region has at least $m^*$ poles. There are two possibilities, either $D_{m^*}(F)$ has exactly $m^*$ poles and whence the closure of $D_{m^*-1}(F)$ has exactly $m^*$ poles or $D_{m^*-1}(F) = D_{m^*}(F)$ and their closures coincide from which it follows that the closure of $D_{m^*-1}(F)$ has at least $m^*$ poles. So in this case the assertion of the theorem is true. Therefore, in the following we can assume that $R_{|{\bf m}|}^*(F) = R_{m^*}(F)$.
As above, should $D_{m^*}(F)$ contain $m^*$ poles, they all lie in the closure of $D_{m^*-1}(F)$, and the proof is complete.

Now, assume that $R_{|{\bf m}|}^*(F) = R_{m^*}(F)$ and $D_{m^*}(F)$ contains less than $m^*$ poles of $F$; then, $R_{|{\bf m}|}^*(F)= R_{m^*}(F) =  R_{m^*-1}(F)$. Let $w$ be the polynomial of degree $\leq m^*-1$ whose zeros are the poles of $F$ in $D_{m^* -1}(F)$ (counting multiplicities). Multiplying \eqref{eq d} by $w$, we obtain
\[
q_{n,{\bf m}}(z)(wF)(z) - w(z)P_{n,{\bf m}}(z) = A_{n,{\bf m}}z^{n+1} + \cdots,
\]
where $\deg(wP_{n,{\bf m}}) \leq n-1$. Notice that  $D_0(wF) = D_{m^* -1}(F)$ and that $wP_{n,{\bf m}}/q_{n,{\bf m}}$ is an incomplete Pad\'e approximation of type $(n,|{\bf m}|,1)$ of $wF$. From hypothesis, for all sufficiently large $n$, $q_{n,{\bf m}}$ is unique  and $\deg q_{n,{\bf m}} = |{\bf m}|$, using \cite[Lemma 3,2]{cacoq2} we obtain that $wF$ is not a polynomial. Then, using \cite[Lemma 2,5]{cacoq2} we conclude that $R_0(wF)< \infty$. Consequently, $R_{m^* -1}(F) = R_{m^*}(F) = R_{|{\bf m}|}^*(F) = R_0(wF)< \infty$. Without loss of generality, we can assume that $R_{|{\bf m}|}^*(F) = 1$. In the rest of the proof we use that the zeros of $q_{n,|{\bf m}|}$ are simple.

Suppose that between the zeros of $q_{|{\bf m}|}$ lying in the closure of $D_{m^*-1}(F)$ less than $m^*$ of them are singular points of $F$. Using Theorem \ref{main} and that the sequence of polynomials $(q_{n,|{\bf m}|-m^*})_{n\geq |{\bf m}|}$ corresponding to the function $F$ is uniformly bounded on compact sets, we deduce that there exists a sequence of indices
$\Lambda' \subset \Lambda$, a constant $0 < C < \infty$, and a polynomial $Q, \deg(Q) > |{\bf m}|-m^*,$ such that
\[\lim_{n \in \Lambda'} q_{n,|{\bf m}|-m^*} = CQ.  \]
This is so because each zero of $q_{|{\bf m}|}$ in the closure of $D_{m^*-1}(F) = D_{|{\bf m}|}^*(F)$ which is a regular point of $F$ and each zero lying outside the closure of $D_{|{\bf m}|}^*(F)$ is a limit point of the zeros of $q_{n,|{\bf m}|-m^*}, n \in \Lambda$. This is clearly impossible because $\deg(q_{n,|{\bf m}|-m^*}) \leq |{\bf m}|-m^*$ for all $n$. Thus, $F$ has at least $m^*$ singularities in the closure of $D_{m^*-1}(F)$ as claimed. That they are system singularities of $\bf f$ follows from Definition \ref{sing sistema}. The proof is complete.
\end{proof}

An immediate consequence of Theorem \ref{DD} is the following result.

\begin{Corollary} \label{cor3} Under the assumptions of Theorem \ref{DD}, suppose that all the zeros of $q_{|\bf m|}$ are distinct in absolute value and for each zero $\zeta$ of $q_{|\mathbf{m}|}$ there exists a function $F$ as in \eqref{c} such that $R_{m^*-1}(F)=|\zeta|$. Then, all the zeros of $q_{|\mathbf{m}|}$ are system singularities of $\mathbf{f}$.
\end{Corollary}

We suspect that the assumption concerning the existence of $F$ with $R_{m^*-1}(F)=|\zeta|$ in Corollary \ref{cor3} is redundant and can be derived from the remaining ones, but have not been able to prove it. This would give a full extension of Suetin's result when all the zeros of $q_{|{\bf m}|}$ are distinct in absolute value. We believe that such an extension is valid for general $q_{|{\bf m}|}$.

\end{document}